\def\C{\mathcal{C}}
\def\E{\mathcal{E}}
\def\E{\mathcal{E}}
\numberwithin{equation}{section}
\newtheorem{theorem}{Theorem}[section]
\newtheorem{proposition}[theorem]{Proposition}
\newtheorem{corollary}[theorem]{Corollary}
\newtheorem{lemma}[theorem]{Lemma}
\newtheorem{remark}[theorem]{Remark}
\newtheorem{definition}[theorem]{Definition}
\newenvironment{proof}[1][Proof]{\noindent\textit{#1.} }{\hfill$\Box$\medskip}
 \title{Isoperiodic families of Poncelet polygons inscribed in a circle and circumscribed about conics from a confocal pencil}
\author[1,3]{Vladimir Dragovi\'c}
\author[2,3]{Milena Radnovi\'c}
\affil[1]{\textsc{The University of Texas at Dallas, Department of Mathematical Sciences}}
\affil[2]{\textsc{The University of Sydney, School of Mathematics and Statistics}}
\affil[3]{\textsc{Mathematical Institute SANU, Belgrade}}
\affil[ ]{\texttt{vladimir.dragovic@utdallas.edu, milena.radnovic@sydney.edu.au}}
\date{}
\begin{document}

\maketitle

\begin{abstract}

Poncelet polygons inscribed in a circle and circumscribed about conics from a confocal family naturally arise in the analysis of the numerical range and Blaschke products.
We examine the behaviour of such polygons when the inscribed conic varies through a confocal pencil and discover cases when each conic from the confocal family is inscribed in an $n$-polygon, which is inscribed in the circle, with the same $n$.
Complete geometric
characterization of such cases for $n\in\{4,6\}$ is given and
proved that this cannot happen for other values of $n$.
We establish  a
relationship of such families of Poncelet quadrangles and hexagons to solutions of a Painlev\'e VI equation.

\emph{Keywords:}  Poncelet polygons; elliptic curves; Cayley-type conditions;   isoperiodic confocal families; Painlev\'e VI equations; Okamoto transformations.

\emph{AMS subclass:} 14H70, 37J70, 34M55, 37A10
\end{abstract}
\newpage
\tableofcontents

\section{Introduction}\label{sec:intro}

Although Poncelet polygons have been occupying the attention of major mathematicians for more than a bicentennial, from Poncelet, Jacobi, Cayley, via Darboux and Lebesgue,to Griffiths, Harris, Berger, Kozlov, Narasimhan, and Hitchin, to mention a few, it seems that new, interesting and unexpected phenomena and connections related to Poncelet porism are still waiting to be discovered.

The main object of our paper are
Poncelet polygons inscribed in a given circle and circumscribed about conics from a confocal family,  which is a setting that emerges from the study of the numerical range and Blaschke products, see \cite{DGSV} and \cite{MFSS2019} and references therein.	

In this work, we found that in some cases all such Poncelet polygons have the same number of sides, even as the inscribed conic changes in the confocal pencil.
This new phenomenon substantially improves and modifies the understanding  and intuition related to injectivity and  monotonicity of induced rotational numbers. (More about rotational numbers can be found in e.g. \cites{King,Duis, DR2011knjiga, DragRadn2014jmd, DR2019cmp}.)
We give elegant classical geometric characterisations of those families for $n=4$ and $n=6$ and show that no other natural $n$ gives rise to such families.

We also discern both analogy and disparity of the obtained results with the case when the circumscribed conic is not a circle but an ellipse  which also belongs to the confocal family.
  That case has been extensively studied in connection with the elliptical billiards and the Great Poncelet Theorem, see for example \cites{BergerGeometryII,DarbouxSUR,KozTrBIL,LebCONIQUES,FlattoBOOK,DR2011knjiga,DragRadn2014bul,DR2019rcd} and references therein.

This paper is organised as follows.
In Section \ref{sec:Poncelet}, we review the Poncelet porism and analytic conditions for closure in our setting.
In particular, we analyse those conditions for polygons with three, four and five sides.

At the beginning of Section \ref{sec:families46}, we introduce \emph{isoperiodic families}.
Each such family consists of a circle and a confocal family of conics, such that each polygon inscribed in the circle and circumscribed about any of the confocal conics is closed with $n$ sides.
Theorems \ref{th:isorotational4} and \ref{th:isorotational6} give a complete characterisation such isoperiodic families for $n\in\{4,6\}$.
Section \ref{sec:noisorot} addresses the natural question of the existence of such families exist for other values of $n$ and gives the negative answer in Theorem \ref{th:noisorot}.

Section \ref{sec:blaschke} provides a further study of Blaschke ellipses. In particular, in Theorem \ref{th:Bla3} we characterize the Blaschke ellipse as the only conic from a confocal pencil, \emph{with both focal points in the interior of the unit disk}, which is $3$-Poncelet inscribed in the unit circle.
This theorem also answers negatively the following question:

\emph{Is it possible to have a triangle inscribed in a circle and circumscribed about an ellipse intersecting the circle whose foci are within the circle?}

Similarly, in Theorem \ref{th:Bla4} we characterize the Blaschke ellipse as the only conic from a confocal pencil, \emph{with both focal points in the interior of the unit disk}, which is $4$-Poncelet inscribed in the unit circle.

In Section \ref{sec:painleve}, the obtained isoperiodic confocal families are used for the construction of explicit solutions to a Painleve\'e VI equation, see Theorem \ref{th:painleve} for $n=4$ and Theorem \ref{th:painlevek6} and for $n=6$.

\section{Poncelet porism and Cayley's conditions}\label{sec:Poncelet}

Given two smooth conics in a general position in the plane, the Poncelet porism states that that existence of a closed polygonal line inscribed in one of those conics and circumscribed about the other one will imply the existence of infinitely many such polygons, which will all have the same number of sides, see Figure \ref{fig:hepta}.
 \begin{figure}[h]
	\begin{center}
		\begin{tikzpicture}[scale=4]
			\draw[thick](0.25,0.15) circle (1);
			\draw[thick](0,0) ellipse (0.595511 and 0.514522);
			
			\draw[very thick] (0.772687, 1.00252)--(0.460977, -0.827491)--(-0.660792, -0.262865)--(-0.41634, 0.895648)--(1.21203, -0.122948)--(-0.320089, -0.671583)--(-0.748112, 0.211419)--(0.772687, 1.00252);
			\draw[black,fill=black](0.772687, 1.00252) circle (0.02);
			\draw[black,fill=black](0.460977, -0.827491) circle (0.02);
			\draw[black,fill=black](-0.660792, -0.262865) circle (0.02);
			\draw[black,fill=black](-0.41634, 0.895648) circle (0.02);
			\draw[black,fill=black](1.21203, -0.122948) circle (0.02);
			\draw[black,fill=black](-0.320089, -0.671583) circle (0.02);
			\draw[black,fill=black](-0.748112, 0.211419) circle (0.02);
			
			\draw[very thick, gray](1.00075,0.810584)--(0.258419,-0.849965)--(-0.697181,-0.170699)--(-0.22,1.03267)--(1.1145,-0.352637)--(-0.425556,-0.587309)--(-0.731608,0.340906)--(1.00075,0.810584);
				\draw[gray,fill=gray](1.00075,0.810584) circle (0.02);
			\draw[gray,fill=gray](0.258419,-0.849965) circle (0.02);
			\draw[gray,fill=gray](-0.697181,-0.170699) circle (0.02);
			\draw[gray,fill=gray](-0.22,1.03267) circle (0.02);
			\draw[gray,fill=gray](1.1145,-0.352637) circle (0.02);
			\draw[gray,fill=gray](-0.425556,-0.587309) circle (0.02);
			\draw[gray,fill=gray](-0.731608,0.340906) circle (0.02);

		\end{tikzpicture}
		\caption{Two heptagons inscribed in a circle and circumscribed about an ellipse.}\label{fig:hepta}
	\end{center}
\end{figure}
Moreover, there will be a polygon with $n$ sides inscribed in conic $\Gamma$ and circumscribed about conic $\mathcal{C}$ if and only if:
\begin{gather*}
	C_2=0,
	\quad
	\left|
	\begin{array}{cc}
		C_2 & C_3
		\\
		C_3 & C_4
	\end{array}
	\right|=0,
	\quad
	\left|
	\begin{array}{ccc}
		C_2 & C_3 & C_4
		\\
		C_3 & C_4 & C_5
		\\
		C_4 & C_5 & C_6
	\end{array}
	\right|=0,
	\dots
	\quad\text{for}\quad n=3,5,7,\dots
	\\
	C_3=0,
	\quad
	\left|
	\begin{array}{cc}
		C_3 & C_4
		\\
		C_4 & C_5
	\end{array}
	\right|=0,
	\quad
	\left|
	\begin{array}{ccc}
		C_3 & C_4 & C_5
		\\
		C_4 & C_5 & C_6
		\\
		C_5 & C_6 & C_7
	\end{array}
	\right|=0,
	\dots
	\quad\text{for}\quad n=4,6,8,\dots,
\end{gather*}
where $\sqrt{\det(\mathcal{C}+\lambda\Gamma)}
=
C_0+C_1\lambda+C_2\lambda^2+C_3\lambda^3+\dots.
$
is the Taylor expansion about $\lambda=0$, and we used letters $\mathcal{C}$ and $\Gamma$ to denote the matrices of those two conics in the projective plane.
This is also equivalent to the divisor relation $n(P_0-P_{\infty})\sim0$ on the elliptic curve
$\mu^2=\det(\C+\lambda\Gamma)$.
The analytic condition for periodicity was derived by Cayley \cite{Cayley1853}, see also \cites{GrifHar1978,DR2011knjiga,DR2019rcd} and references therein.

\begin{remark}
Two non-degenerate conics are in a general position when their complexifications intersect at four distinct points.
In that case, the cubic curve $\mu^2=\det(\C+\lambda\Gamma)$ is smooth.
The conics have a point of tangency if and only if the polynomial $\det(\C+\lambda\Gamma)$ has a multiple root, meaning that the curve has a singular point.
\end{remark}

\begin{definition}\label{def:n-pair}
If there is a polygon with $n$ sides inscribed in $\Gamma$ and circumscribed about $\C$, we will say that $\Gamma$ and $\C$ are an \emph{$n$-Poncelet pair} or that $\mathcal{C}$ is \emph{$n$-Poncelet inscribed in} $\Gamma$.
\end{definition}

In this work, we will consider the case when $\Gamma$ is a circle:
$$
\Gamma:\ (x-x_0)^2+(y-y_0)^2=1
$$
and $\mathcal{C}=\mathcal{C}(t)$ a conic from the following confocal family:
$$
\mathcal{C}(t):\ \frac{x^2}{a-t}+\frac{y^2}{b-t}=1, \quad a>b>0.
$$
As above, we will use the same letters to denote the matrices corresponding to those two conics:
$$
\Gamma
=
\left(
\begin{array}{ccc}
1 & 0 & -x_0\\
0 & 1 & -y_0\\
-x_0 & -y_0 & x_0^2+y_0^2-1
\end{array}
\right),
\quad
\mathcal{C}(t)=
\left(
\begin{array}{ccc}
\frac1{a-t} & 0 & 0\\
0 & \frac1{b-t} & 0\\
0 & 0 & -1
\end{array}
\right).
$$
We denote:
\begin{equation}\label{eq:Dt}
\begin{aligned}
D_t(\lambda)=&\ (a-t)(b-t)\det(\mathcal{C}(t)+\lambda\Gamma)
\\
=&-(a-t) (b-t) \lambda ^3
+\lambda ^2
\left(
(a-t) (y_0^2-1)+(b-t) (x_0^2-1)-(a-t) (b-t)
\right)
\\&
+\lambda  \left(x_0^2+y_0^2-1-(a-t)-(b-t)\right)
-1,
\end{aligned}
\end{equation}
and the Taylor series around $\lambda=0$:
$$
\sqrt{D_t(\lambda)}
=
C_0(t)+C_1(t)\lambda+C_2(t)\lambda^2+C_3(t)\lambda^3+\dots.
$$
Notice that the coefficients $C_k(t)$ are polynomial in $t$, as well as in $a$, $b$, $x_0^2$, $y_0^2$.  Thus, the Cayley conditions:
\begin{equation}\label{eq:pnt}	
\begin{aligned}
&p_n(t):=
\det\left(
\begin{array}{llll}
	C_2(t) & C_3(t) & \dots & C_{m+1}(t)\\
	C_3(t) & C_4(t) & \dots & C_{m+2}(t)\\
	 \dots&\dots&\dots&\dots\\
	C_{m+1}(t)& C_{m+2}(t) & \dots & C_{2m}(t)
\end{array}
\right) = 0,
&\quad\text{for}&\quad n=2m+1,
\\
&p_n(t):=
\det\left(
\begin{array}{llll}
	C_3(t) & C_4(t) & \dots & C_{m+2}(t)\\
	C_4(t) & C_5(t) & \dots & C_{m+3}(t)\\
	\dots&\dots&\dots&\dots\\
	C_{m+2}(t)& C_{m+3}(t) & \dots & C_{2m+1}(t)
\end{array}
\right) = 0,
&\quad\text{for}&\quad n=2m+2.
\end{aligned}
\end{equation}
 define polynomials in $t$, denoted as $p_n(t)$, which are also polynomials in $a$, $b$, $x_0^2$, $y_0^2$.

Let us denote the corresponding family of curves as:
\begin{equation}\label{eq:elliptic-curves}
\mathcal{E}_t: \mu^2=D_t(\lambda).
\end{equation}

\begin{remark}
The curve $\mathcal{E}_t$ is elliptic whenever conic $\C(t)$ is non-degenerate and with no points of tangency with $\Gamma$, i.e.~whenever $t\not\in\{a,b\}$ and all roots of $D_t(\lambda)$ are simple.
Thus, the curve $\mathcal{E}_t$ is smooth for all but finitely many values of $t\in\mathbf{C}$.

Notice that, for $t\in\{a,b\}$, the curve $\E_t$ is not cubic: its degree will be equal $2$.
\end{remark}

\begin{proposition}\label{prop:npair}
Circle $\Gamma$ and conic $\mathcal{C}(t)$ are an $n$-Poncelet pair if and only if
$$
		p_n(t)=0,
$$
	for smooth and non-smooth cubics  $\mathcal{E}_t$, i.e.~for any $t\not\in\{a,b\}$,  where $p_n(t)$ is defined in \eqref{eq:pnt}.
\end{proposition}
\begin{proof}
If $\mathcal{E}_t$ is smooth, this is just a straightforward Cayley's condition.
Otherwise, $D_t(\lambda)$ is a cubic polynomial with a double root, which is equivalent to conics $\Gamma$ and $\mathcal{C}(t)$ having at least one point of tangency.
In this case, the conclusion follows from the analysis in \cite{FlattoBOOK}  (see also \cite{DR2024rcd}).
\end{proof}

Now, we are going to expand the above Cayley's conditions for a few small values of $n$.

\begin{corollary}\label{cor:k3}
Conic $\mathcal{C}(t)$ is $3$-Poncelet inscribed in $\Gamma$ if and only if $C_2(t)=0$:
$$
4t+1 - 2 (a+b) + (a-b)^2
  - 2 x_0^2(1+a-b)
   - 2 y_0^2(1-a+b) + (x_0^2+y_0^2)^2=0.
$$
That equation has the unique solution in $t$, which we will denote as $t_3$:
\begin{equation}\label{eq:t3}
t_3=-\frac{1}{4} \left(1 - 2 (a+b) + (a-b)^2
- 2 x_0^2(1+a-b)
- 2 y_0^2(1-a+b) + (x_0^2+y_0^2)^2\right).
\end{equation}
\end{corollary}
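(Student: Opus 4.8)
The plan is to take Theorem~\ref{th:cayley} at face value: for $k=3$ the Cayley-type condition is simply $C_2(t)=0$, so the whole statement reduces to extracting the single Taylor coefficient $C_2$ of $\sqrt{-(a-t)(b-t)D_t(\lambda)}$ at $\lambda=0$ and then solving $C_2(t)=0$ for $t$. No geometry is needed beyond the theorem just proved; what remains is a controlled expansion followed by an algebraic simplification.

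First I would turn the square root into a genuine power series. Let $\alpha$ and $\beta$ denote the coefficients of $\lambda$ and of $\lambda^2$ in the large bracket of \eqref{eq:Dt}, namely $\alpha=-(a-t)-(b-t)+x_0^2+y_0^2-1$ and $\beta=-(a-t)(b-t)+(a-t)(y_0^2-1)+(b-t)(x_0^2-1)$. Multiplying by $-(a-t)(b-t)$ cancels the prefactor in \eqref{eq:Dt}, so $P(\lambda):=-(a-t)(b-t)D_t(\lambda)$ is an honest polynomial with $P(\lambda)=1-\alpha\lambda-\beta\lambda^2+(a-t)(b-t)\lambda^3$ and $P(0)=1$. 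Expanding $\sqrt{P}=\sqrt{1+u}=1+\tfrac12u-\tfrac18u^2+\cdots$ with $u=-\alpha\lambda-\beta\lambda^2+\cdots$ and collecting the $\lambda^2$ term gives $C_2=-\tfrac12\beta-\tfrac18\alpha^2$. Hence $C_2=0$ is equivalent to $4\beta+\alpha^2=0$, and substituting the explicit $\alpha,\beta$ is the only real computation.

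The one structural point worth flagging is the behaviour of $4\beta+\alpha^2$ as a polynomial in $t$. A priori it is quadratic, since both $\beta$ and $\alpha^2$ contain a $t^2$ term; but the term $-(a-t)(b-t)$ in $\beta$ contributes $-4t^2$ to $4\beta$, while $\alpha=2t+\cdots$ contributes $+4t^2$ to $\alpha^2$, so the two cancel and $4\beta+\alpha^2$ is in fact \emph{linear} in $t$. A short count of the linear terms shows the coefficient of $t$ is exactly $4$, matching the leading term $4t$ of the stated equation. Being linear in $t$ with nonzero leading coefficient, that equation has the unique root $t_3$ of \eqref{eq:t3}, and the corollary follows.

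I expect the only obstacle to be clerical: keeping signs straight when the overall $-1$ is absorbed into $P$ and when squaring $\alpha$, and then verifying both the cancellation of the $t^2$ terms and the value $4$ of the linear coefficient. There is no conceptual difficulty, but the $t^2$ cancellation is exactly the fact that forces a \emph{unique} admissible $t_3$ rather than a pair of candidates, so I would check it explicitly rather than take it on trust.
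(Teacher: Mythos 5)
Your proposal is correct and is essentially the computation the paper intends: the corollary is stated without a separate proof because it follows directly from Theorem~\ref{th:cayley} with $k=3$ by expanding $\sqrt{-(a-t)(b-t)D_t(\lambda)}$, extracting $C_2=-\tfrac12\beta-\tfrac18\alpha^2$, and simplifying $4\beta+\alpha^2=0$. Your explicit check that the $t^2$ terms cancel and the linear coefficient is exactly $4$ is precisely the point that makes the solution $t_3$ unique, so the proposal matches the paper's (implicit) argument.
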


\begin{corollary}\label{cor:k4}
Conic $\mathcal{C}(t)$ is $4$-Poncelet inscribed in $\Gamma$ if and only if $C_3(t)=0$, i.e:
$$
\alpha_4+\beta_4t=0,
$$
where:
\begin{gather*}
\alpha_4=8ab+4(x_0^2+y_0^2-a-b-1)t_3,
\\
\beta_4=8t_3+4(1-a-b-x_0^2-y_0^2),
\end{gather*}
and $t_3$ is given by \eqref{eq:t3}.
\end{corollary}

\begin{corollary}\label{cor:k5}
Conic $\mathcal{C}(t)$ is $5$-Poncelet inscribed in $\Gamma$ if and only if:
$$
\det\left(
\begin{array}{cc}
C_2(t) & C_3(t)\\
C_3(t) & C_4(t)
\end{array}
\right)
=0,
$$
which is equivalent to:
$$
\alpha_5+\beta_5t+\gamma_5t^2+\delta_5t^3=0,
$$
with
$$
\begin{aligned}
\alpha_5=\ &
-64 t_3^3+
128ab \left(  a +  b+ 1-  x_0^2-y_0^2\right)t_3-256 a^2 b^2,
\\
\beta_5=\ &
192t_3^2
+128 t_3
\left(
(a+b) (x_0^2+ y_0^2-a-b-1)
-2 a b
\right)
+128 a b \left(3 a+3 b+x_0^2+y_0^2-1\right),
\\
\gamma_5
=\ &
64 \left(6 a+6 b-2 x_0^2-2 y_0^2-1\right)t_3
-128 \left(a^2+4 a b+a x_0^2+a y_0^2-a+b^2+b x_0^2+b y_0^2-b\right),
\\
\delta_5
=\ &-256t_3+64 \left(2 a+2 b+2 x_0^2+2 y_0^2-1\right),
\end{aligned}
$$
and $t_3$ is given by \eqref{eq:t3}.
\end{corollary}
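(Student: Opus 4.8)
The plan is to apply Theorem~\ref{th:cayley} directly with $k=5$. By that theorem the pentagon condition is the vanishing of the $2\times2$ determinant, i.e.
\[
C_2(t)\,C_4(t)-C_3(t)^2=0,
\]
so the whole task reduces to producing closed expressions for $C_2,C_3,C_4$ as functions of $t$ and then assembling this single combination.

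To obtain the coefficients I would start from \eqref{eq:Dt} and clear the prefactor, writing
\[
-(a-t)(b-t)\,D_t(\lambda)=1+p_1\lambda+p_2\lambda^2+p_3\lambda^3,
\]
where $p_1$ is affine and $p_2,\,p_3=(a-t)(b-t)$ are quadratic in $t$. Regrouping by powers of $t$ presents this as $P:=P_0+P_1t+P_2t^2$ with $P_2=\lambda^2(1+\lambda)$, a polynomial of degree $3$ in $\lambda$ and degree $2$ in $t$. The $C_j$ then come either from the binomial series $\sqrt{1+u}=1+\tfrac12u-\tfrac18u^2+\tfrac1{16}u^3-\tfrac5{128}u^4+\cdots$ with $u=p_1\lambda+p_2\lambda^2+p_3\lambda^3$, or, more transparently, from squaring $\sum_jC_j\lambda^j=\sqrt P$ and matching coefficients of $\lambda^n$, which yields the recursion $\sum_{i+j=n}C_iC_j=[\lambda^n]P$.

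The conceptual point, and the reason the answer is as short as stated, is a collapse of degrees in $t$. Naively $C_2C_4$ and $C_3^2$ are both sextic in $t$, so one might fear a degree-$6$ condition. In fact $C_2=\tfrac12(t_3-t)$ and $C_3$ is affine in $t$ (these are Corollaries~\ref{cor:k3} and \ref{cor:k4}), while $C_4$ is only \emph{quadratic}: reading the recursion at $n=4$ and using that $P$ carries no $\lambda^4$ term gives $2C_4=-2C_1C_3-C_2^2$, and (recalling $C_1=\tfrac12 p_1$ is affine) the right-hand side is built from products of polynomials affine in $t$. Every one of these cancellations can be traced to the single algebraic identity satisfied by the pencil data, namely that the constant term of $p_1$ and the $t$-coefficient of $p_2$ sum to $-1$; it is this identity that kills the top coefficients of $C_2$ and $C_3$ and, through the recursion, of $C_4$. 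Consequently $C_2C_4-C_3^2$ has degree exactly $3$ in $t$, which is precisely the announced form $\alpha_5+\beta_5t+\gamma_5t^2+\delta_5t^3$.

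It then remains to collect powers of $t$ in $C_2C_4-C_3^2$, clear the common numerical denominator, and rewrite the coefficients compactly through the quantity $t_3$ of Corollary~\ref{cor:k3} so as to match $\alpha_5,\beta_5,\gamma_5,\delta_5$. I expect the main obstacle to be purely computational: the intermediate expressions are large, and the entire substance lies in the degree collapse of the previous paragraph. Once $C_4$ is known to be quadratic the cubic shape is forced, and the remaining labour is careful bookkeeping to certify the explicit coefficients and to confirm $\delta_5\not\equiv0$, so that the condition is genuinely a cubic rather than of lower degree.
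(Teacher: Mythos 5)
Your proposal is correct and coincides with the paper's (implicit) proof: Corollary~\ref{cor:k5} is exactly the specialization of Theorem~\ref{th:cayley} to $k=5$, computing $C_2,C_3,C_4$ from the expansion of $\sqrt{-(a-t)(b-t)D_t(\lambda)}$ and collecting powers of $t$ in $C_2C_4-C_3^2$, and your degree analysis ($C_2=\tfrac12(t_3-t)$, $C_3$ affine, $C_4$ quadratic, hence a cubic condition with the stated coefficients after clearing the common numerical factor) is accurate. One cosmetic correction: the top ($t^2$) coefficient of $C_2$ vanishes because four times the $t^2$-coefficient of $p_2$ equals the square of the $t$-coefficient of $p_1$ (both equal $4$), whereas the identity you cite (constant term of $p_1$ plus $t$-coefficient of $p_2$ equals $-1$) is what fixes the slope of $C_2$ at $-\tfrac12$; this slip is harmless, since affineness of $C_2$ and $C_3$ is independently guaranteed by Corollaries~\ref{cor:k3} and~\ref{cor:k4}.
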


\section{Isoperiodic families of Poncelet polygons}
\label{sec:families46}

In this section, we are searching for a circle $\Gamma$ and a confocal pencil $\C(t)$, such that $\C(t)$ is $n$-Poncelet inscribed in $\Gamma$ for each $t\not\in\{a,b\}$, with a fixed $n$.
In such a case, we will say that $(\Gamma, \C(t))$ is \emph{an isoperiodic family}.

Notice that, according to Corrollary \ref{cor:k3}, there are no isoperiodic families for $n=3$.

We will focus our search to the cases $n\in\{4,6\}$, because of the simplicity of the Cayley's conditions for those values.

\subsection{Quadrangles}\label{sec:k=4}

In order to simplify the calculations, we will start with the analysis of the cases when the centre of the circle $\Gamma$ belongs to one of the axes of the confocal family.

The first is case when the centre of $\Gamma$ is on the $y$-axis, i.e.~$x_0=0$.

The condition for quadrangles, $C_3(t)=0$, according to Corollary \ref{cor:k4}, becomes:
$$
\left(a-b+y_0^2-1\right)
\left(
2 \left(a-b+y_0^2+1\right) t-
(a^2-b^2+2 b y_0^2+2 b-y_0^4+2 y_0^2-1)
\right)
=0.
$$
Having in mind that $a>b$, which implies that the second factor in the above expression is non-constant,
we can conclude that $C_3(t)=0$ for all $t$ is equivalent to $a-b+y_0^2=1$.
It is easy to see that this is equivalent to the foci of the family $\mathcal{C}(t)$ lying on the circle $\Gamma$, see Figure \ref{fig:quadrangles}.
 \begin{figure}[h]
	\begin{center}
		\begin{tikzpicture}[scale=4]
		\draw[thick](0,0.3) circle (1);
		\draw[black,fill=gray](0.954,0) circle (0.02);
		\draw[black,fill=gray](-0.954,0) circle (0.02);
		\draw[thick,gray](0,0) ellipse (1.095 and 0.5385);
		\draw[thick,gray](0,0) ellipse (1.22474 and 0.7681151);
		
		\draw (-0.29552, 1.25534)--(0.997618, 0.231014)--(0.29552,1.25534)--(-0.997618, 0.231014)--(-0.29552, 1.25534);
		\draw[black,fill=black](-0.29552, 1.25534) circle (0.02);
		\draw[black,fill=black](0.997618, 0.231014) circle (0.02);
		\draw[black,fill=black](0.29552, 1.25534) circle (0.02);
		\draw[black,fill=black](-0.997618, 0.231014) circle (0.02);
		\draw[gray,fill=gray!50](0.930769, 0.283966) circle (0.02);
		\draw[gray,fill=gray!50](-0.930769, 0.283966) circle (0.02);
		\draw[gray,fill=gray!50](1.0381, 0.171955) circle (0.02);
		\draw[gray,fill=gray!50](-1.0381, 0.171955) circle (0.02);
		
		\draw (-0.479426, 1.17758)--(0.979586, 0.501026)--(0.479426, 1.17758)--(-0.979586, 0.501026)--(-0.479426, 1.17758);
		\draw[black,fill=black](-0.479426, 1.17758) circle (0.02);
		\draw[black,fill=black](0.979586, 0.501026) circle (0.02);
		\draw[black,fill=black](0.479426, 1.17758) circle (0.02);
		\draw[black,fill=black](-0.979586, 0.501026) circle (0.02);
		\draw[gray,fill=gray!50](0.728133, 0.617627) circle (0.02);
		\draw[gray,fill=gray!50](1.11113, 0.323094) circle (0.02);
		\draw[gray,fill=gray!50](-0.728133, 0.617627) circle (0.02);
		\draw[gray,fill=gray!50](-1.11113, 0.323094) circle (0.02);
		
		\end{tikzpicture}
		\caption{A circle and any conic with the foci on the circle are a $4$-Poncelet pair.}\label{fig:quadrangles}
	\end{center}
\end{figure}
\begin{remark}\label{rem:43}
Notice that the Corollary \ref{cor:k3} implies that there is a unique value of $t$ such that the condition for triangles is satisfied.
In the case when the foci of $\mathcal{C}(t)$ lie on $\Gamma$, this is seemingly in contradiction with the conclusion that the condition for quadrangles is satisfied for any $t$.

When we calculate $t_3$ using the formula \eqref{eq:t3} from Corollary \ref{cor:k3}, with the condition  $a-b+y_0^2=1$, we get $t_3=b$, which corresponds to the singular conic $\C(b)$ from the confocal family $\mathcal{C}(t)$.
\end{remark}

Consider now the other symmetric case: $y_0=0$.
The condition for quadrangles, $C_3(t)=0$, is:
\begin{equation}\label{c3=0}
\left(a-b-x_0^2+1\right)
\left(
2( x_0^2+1 - a + b) t
+
(a^2-b^2-2 a x_0^2-2a+x_0^4-2 x_0^2+1)
\right)
=0,
\end{equation}
so we see that this condition is satisfied for each $t$ whenever either:
\begin{itemize}
	\item $x_0^2=a-b+1$; or
	\item $- a + b + x_0^2+1
	=
	a^2-b^2-2 a x_0^2-2a+x_0^4-2 x_0^2+1
	=0$.
\end{itemize}
Notice that the second possibility leads to $y_0=0$ and $a=b+1$, which a special case represented in Figure \ref{fig:quadrangles}, when the centre of the circles coincides with the centre of the confocal family.

On the other hand, we can see the geometric meaning of the first possibilty by calculating the cross ratio of the foci with the endpoints of the diameter of $\Gamma$ which lies on the $x$-axis:
$$
\frac{\sqrt{a-b}-(x_0-1)}{\sqrt{a-b}-(x_0+1)}
\cdot
\frac{-\sqrt{a-b}-(x_0+1)}{-\sqrt{a-b}-(x_0-1)}
=
-1.
$$
Thus, we get that the foci are symmetric to each other with respect to $\Gamma$, see Figure \ref{fig:quad2}.
\begin{figure}[h]
\centering
		\begin{tikzpicture}[scale=4]
		\draw[thick](1.22474,0) circle (1);
		\draw[black,fill=gray](0.7071,0) circle (0.02);
		\draw[black,fill=gray](-0.7071,0) circle (0.02);
		\draw[thick,gray](0,0) ellipse (1 and 0.7071);
		\draw[thick,gray](0,0) ellipse (1.22474 and 1);
		
		\draw (1.22474, 1.)--(0.816497, -0.912871)--(1.22474, -1.)--(0.816497, 0.912871)--(1.22474, 1.);
		\draw[black,fill=black](1.22474, 1.) circle (0.02);
		\draw[black,fill=black](1.22474, -1.) circle (0.02);
		\draw[black,fill=black](0.816497, -0.912871) circle (0.02);
		\draw[black,fill=black](0.816497, 0.912871) circle (0.02);
		\draw[gray,fill=gray!50](0.988804, -0.105516) circle (0.02);
		\draw[gray,fill=gray!50](-0.288949, -0.676945) circle (0.02);
		\draw[gray,fill=gray!50](0.988804, 0.105516) circle (0.02);
		\draw[gray,fill=gray!50](-0.288949, 0.676945) circle (0.02);
		
		\draw (0.745319, 0.877583)--(2.01256, -0.615912)--(0.745319, -0.877583)--(2.01256, 0.615912)--(0.745319, 0.877583);
		\draw[black,fill=black](0.745319, 0.877583) circle (0.02);
		\draw[black,fill=black](2.01256, -0.615912) circle (0.02);
		\draw[black,fill=black](0.745319, -0.877583) circle (0.02);
\draw[black,fill=black](2.01256, 0.615912) circle (0.02);
		\draw[gray,fill=gray!50](1.00674, 0.569485) circle (0.02);
		\draw[gray,fill=gray!50](0.30028, -0.969478) circle (0.02);
		\draw[gray,fill=gray!50](1.00674, -0.569485) circle (0.02);
		\draw[gray,fill=gray!50](0.30028, 0.969478) circle (0.02);
		
		\end{tikzpicture}
		\caption{Each conic from the confocal family is inscribed in quadrangles whose vertices belong to the circle. The foci are symmetric to each other with respect to the circle.}\label{fig:quad2}
\end{figure}

\begin{remark}
Similarly as in Remark \ref{rem:43}, we should check the case when both Cayley's conditions, for triangles and quadrangles, are satisfied: $C_2(t)=C_3(t)=0$.
The value of $t_3$ from Corollary \ref{cor:k3} is $t_3=a$ when the foci are symmetric with respect to the circle $\Gamma$.
$\C(a)$ is a degenerate conic coinciding with the $y$-axis, which is outside  $\Gamma$ when the foci are symmetric with respect to that circle.
\end{remark}

We summarize the results of the above analysis as follows:

\begin{proposition}\label{prop:quad1}
If the focal points of the family $\C(t)$ either belong to $\Gamma$ or are symmetric with respect to $\Gamma$, then $\Gamma$ and $\C(t)$ are a $4$-Poncelet pair for each  $t\not\in\{a,b\}$.
\end{proposition}

\begin{proof}
The focal points belong to $\Gamma$ or they are symmetric with respect to $\Gamma$ if and only if $x_0=0$ and $a-b+y_0^2=1$ or, respectively, $y_0=0$ and $a-b-x_0^2+1=0$.
In both of those cases, we get that $\alpha_4=\beta_4=0$ (see Corollary \ref{cor:k4}), thus the corresponding Cayley condition $C_3(t)=0$ is satisfied for each $t$, which implies that $\Gamma$ and $\C(t)$ are a $4$-Poncelet pair for any $t$ for which $\C(t)$ is non-degenerate.
\end{proof}

Now we will provide two alternative proofs.

\noindent\emph{Second proof of Proposition \ref{prop:quad1}, based on synthetic geometry.} First we consider the case when the the foci $F_1$, $F_2$ are symmetric with respect to $\Gamma$.
In that case, the line $F_1F_2$ intersects $\Gamma$ at points $M$, $N$, such that $MN$ is a diameter of $\Gamma$, and:
\begin{equation}\label{eq:harmonic}
[F_1, F_2; M, N]=-1.
\end{equation}
Exactly one of the points $M$, $N$ is between $F_1$ and $F_2$, and we will assume that is $M$.

Let $\C$ be any conic with foci $F_1$, $F_2$ and $P$ be an intersection point of $\mathcal{C}$.
Since $\angle MPN=90^{\circ}$ and, from \eqref{eq:harmonic} we get $[PF_1,PF_2;PM,PN]=-1$, we get that the lines $PM$, $PN$ are the bisectors of the angles determined by the lines $PF_1$, $PF_2$.
Since $PF_1$, $PF_2$ satisfy the billiard reflection law off $\mathcal{C}$, we will have that one of the line $PM$, $PN$ is tangent to $\mathcal{C}$ and the other one is orthogonal to that conic at $P$.
Denote by $Q$ the point symmetric to $P$ with respect to the line $MN$.
Now it is easy to see that $\{P,Q\}=\mathcal{C}\cap\Gamma$.

If $\mathcal{C}$ is ellipse, then $N$ is outside that conic, and $M$ within it, so $PN$ and $QN$ are tangent to $\mathcal{C}$, thus $NPNQ$ is a polygonal line inscribed in $\Gamma$ and circumscribed about conic $\C$, i.e.~those two conics are a $4$-Poncelet pair.

On the other hand, if $\mathcal{C}$ is a hyperbola, $MPMQ$ is a polygonal line circumscribed about that conic and inscribed in $\Gamma$, so again those two conics are a $4$-Poncelet pair.

Second, we consider the case when both foci $F_1$, $F_2$ belong to the circle $\Gamma$.
Let $M$, $N$ denote the mid-points of two arcs of $\Gamma$ having the foci $F_1$, $F_2$ as the end-points.
Let $\C$ be any conic with foci $F_1$, $F_2$, such that $\Gamma$ is not entirely contained within that conic.

Denote by $P$ an intersection point of $\C$ and $\Gamma$.
Then lines $PM$ and $PN$ and the bisectors of the two angles determined by the lines $PF_1$ and $PF_2$, thus the focal property of the conics implies that one of $PM$, $PN$ is tangent to $\C$ and the other orthogonal to that conic.

Assume $PM$ is the tangent and let $Q$ be the point symmetric to $P$ with respect to $MN$.
We have that $MPMQ$ is inscribed in $\Gamma$ and circumscribed about $\C$, thus those two conics are a $4$-Poncelet pair.
\hfill$\Box$\medskip

\begin{corollary}\label{cor:twointersection}
	If the focal points of a conic $\C$  are symmetric with respect to $\Gamma$, then $\Gamma$ and $\C$ have at most two intersection points.
\end{corollary}

\begin{corollary}
	Let $F$ be a point on the circle $\Gamma$.
	Any circle $\mathcal{C}$ with the center at $F$ is $4$-Poncelet inscribed in $\Gamma$.
\end{corollary}
\begin{proof}
	Let $P$, $Q$ be the intersection points of the circles $\mathcal{C}$ and $\Gamma$ and let $N$ be the second intersection point of the line $FO$ with $\Gamma$.
	
	Then $NP$ is orthogonal to $FP$, since $FN$ is a diameter of $\Gamma$ and $P$ belongs to $\Gamma$.
	Thus, $NP$ is tangent to $\mathcal{C}$ at $P$.
	Similarly, $NQ$ is tangent to $\mathcal{C}$ at $Q$.
	Thus, $NPNQ$ is a Poncelet quadrilateral inscribed in $\Gamma$ and circumscribed about $\mathcal{C}$.
\end{proof}

\medskip
\noindent\emph{Third proof of Proposition \ref{prop:quad1}, based on the theory of invariants.}
Denote by $I_j$, $j\in\{1,2, 3, 4\}$, the coefficients of $D_t(\lambda)$ from \eqref{eq:Dt} with $\lambda^{4-j}$. The condition that there exists a quadrilateral inscribed in $\Gamma$ and circumscribed about $\mathcal{C}(t)$ can be expressed in terms of $I_j$:
\begin{equation}\label{eq:I4}
	8I_1I_4^2-4I_2I_3I_4+I_3^3=0.
\end{equation}
(see for example \cite{DR2011knjiga}*{Example 4.50}\footnote{There is a misprint in that example. Here we corrected the term $I_3^2$ to $I_3^3$}.

As said above, (i) the focal points belong to $\Gamma$ if and only if $x_0=0$ and $a-b+y_0^2=1$ and (ii)  they are symmetric with respect to $\Gamma$ if and only if $y_0=0$ and $a-b-x_0^2+1=0$.

In the case (i), the coefficients $I_j$ take the form (up to a common factor):
\begin{equation}\label{eq:Ii4}
	\begin{aligned}
		I_1&=-(a-t)(b-t);\\
		I_2&=I_1-t(b-a-1)+(b-a)a - b;\\
		I_3&=2(t-a);\\
		I_4&=-1.
	\end{aligned}
\end{equation}
By substituting the relations \eqref{eq:Ii4} for $I_j$ into \eqref{eq:I4} one gets the identity for all $t$.
Case (ii) can be treated analogously, which completes the proof.
\hfill$\Box$\linebreak

Finally, we will show that there are no other examples when $\Gamma$ and $\C(t)$ are a $4$-Poncelet pair for each $t$.

\begin{theorem}\label{th:isorotational4}
There are quadrangles inscribed in circle $\Gamma$ and circumscribed about conic $\C(t)$ for each $t\not\in\{a,b\}$
if and only if the focal points of the family $\C(t)$ either belong to $\Gamma$ or are symmetric with respect to $\Gamma$.
\end{theorem}

\begin{proof}
Suppose that $\mathcal{C}(t)$ is $4$-Poncelet inscribed in $\Gamma$ for each $t\not\in\{a,b\}$.
Thus the Cayley condition $C_3(t)=0$ is satisfied for infinitely many values of $t$.
Since $C_3(t)$ is a polynomial in $t$, we have that $C_3(t)=0$ everywhere.

By Corollary \ref{cor:k4}, $C_3(t)=\alpha_4+\beta_4t$ is a linear expression in $t$, so $\alpha_4=\beta_4=0$.
From there:
$$
\alpha_4+(a +b - x_0^2- y_0^2+3) \beta_4=0,
$$
which gives:
$$
1-(a-b)^2+(a-b-1) x_0^2+(b-a-1) y_0^2=0.
$$
From there, we get:
$$
y_0^2=\frac{(a-b-1) \left(a-b-x_0^2+1\right)}{-a+b-1},
$$
which we substitute into $\beta_4=0$:
$$
\frac{x_0^2 (a-b)^2 \left(a-b-x_0^2+1\right)}{(a-b+1)^2}=0.
$$
The first possibility, $x_0=0$, will imply $y_0^2=1+b-a$, which means that the foci belong to $\Gamma$.
The second case, $a-b-x_0^2-1=0$, implies $y_0=0$ and that the foci are symmetric with respect to $\Gamma$.
Finally, the case $a-b=0$ implies that $x_0^2+y_0^2=1$, i.e.~the origin is on $\Gamma$ and it is also the centre of the concentric family of circles $\mathcal{C}(t)$, which is a special case shown in Figure \ref{fig:quadrangles}, when $\C(t)$ are concentric circles.

The second half of the equivalence is Proposition \ref{prop:quad1}, thus the proof is concluded.
\end{proof}

{
\subsection{Hexagons}
\label{sec:k=6}

The condition for existence of a hexagon inscribed in $\Gamma$ and circumscribed about $\C(t)$ is:
$$
p_6(t)=\det\left(
\begin{array}{cc}
C_3(t) & C_4(t)\\
C_4(t) & C_5(t)
\end{array}
\right)
=0
$$
which is equivalent to:
\begin{equation}\label{eq:hex}
C_2(t)(\alpha_6+\beta_6t+\gamma_6t^2+\delta_6t^3)=0,
\end{equation}
with
$$
\begin{aligned}
&\delta_6=64 \left(
(a-b)^2 + 2 (a-b)(y_0^2- x_0^2)  + (x_0^2+y_0^2)^2
\right),
\\
&\gamma_6=128 (a - b) (a - b - x_0^2 + y_0^2)-\frac{\delta_6^2}{256}-\frac{\delta_6}4 (1 + 6 a + 6 b - 2 x_0^2 - 2 y_0^2).
\end{aligned}
$$
That condition is satisfied for each $t$ if and only if $\alpha_6=\beta_6=\gamma_6=\delta_6=0$.
Now, we see that $\gamma_6=0$ and $\delta_6=0$ imply $a=b$ or $a-b-x_0^2+y_0^2=0$.
Substituting $a=b$ to the expression for $\delta_6$, we get $x_0=y_0=0$, which means that $\C(t)$ are concentric circles, and $\Gamma$ shares the same centre with them.
Obviously, $\Gamma$ and $\C(t)$ cannot be a $6$-Poncelet pair for more than one value of $t$ in this case.

If $a-b=x_0^2-y_0^2$, the condition $\delta_6=0$ gives $x_0=0$ or $y_0=0$.
Since $a>b$, we can have only $y_0=0$ and the centre of $\Gamma$ is at one of the foci of the family $\C(t)$.
Now, calculating the coefficients $\alpha_6$, $\beta_6$ and substituting $y_0=0$, $x_0^2=a-b$, we get:
$$
\begin{aligned}
&\alpha_6=-(4 a-4 b-1) \left(16 a^2-16 a b+8 a-4 b-3\right),
\\
&\beta_6=4 (4 a-4 b-1) (4 a-4 b+1).
\end{aligned}
$$
Since $a>b$, the condition $\alpha_6=\beta_6=0$ implies $4a-4b=1$.
Thus, the distance between the foci equals $2\sqrt{a-b}=1$, so we can conclude:
\begin{theorem}\label{th:isorotational6}
Each non-degenerate conic $\mathcal{C}(t)$ is $6$-Poncelet inscribed in $\Gamma$ if and only if one of the focal points belongs to $\Gamma$ and the other one is at the centre of $\Gamma$, see Figure \ref{fig:hex}.
\end{theorem}
}
\begin{figure}[h]
	\centering
	\begin{tikzpicture}[scale=4]
		\draw[thick](0.5,0) circle (1);
		\draw[black,fill=gray](0.5,0) circle (0.02);
		\draw[black,fill=gray](-0.5,0) circle (0.02);
		\draw[thick,gray](0,0) ellipse (0.774597 and 0.591608);
		\draw[thick,gray](0,0) ellipse (1 and 0.866025);
		
		\draw (0.5, 1.)--(1.04101, -0.841014)--(-0.341014, -0.541014)--(0.5, -1.)--(1.04101, 0.841014)--(-0.341014, 0.541014)--(0.5, 1.);
		\draw[black,fill=black](0.5, 1.) circle (0.02);
		\draw[black,fill=black](1.04101, -0.841014) circle (0.02);
		\draw[black,fill=black](-0.341014, -0.541014) circle (0.02);
			\draw[black,fill=black](0.5, -1.) circle (0.02);
		\draw[black,fill=black](1.04101, 0.841014) circle (0.02);
		\draw[black,fill=black](-0.341014, 0.541014) circle (0.02);
		\draw[gray,fill=gray!50](0.755794, 0.12956) circle (0.02);
		\draw[gray,fill=gray!50](-0.211765, -0.56907) circle (0.02);
		\draw[gray,fill=gray!50](-0.450339, -0.481349) circle (0.02);
		\draw[gray,fill=gray!50](0.755794, -0.12956) circle (0.02);
		\draw[gray,fill=gray!50](-0.211765, 0.56907) circle (0.02);
		\draw[gray,fill=gray!50](-0.450339, 0.481349) circle (0.02);
		
		\draw(1.37758, 0.479426)--(0.553871, -0.998548)--(0.0685467, -0.902135)--(1.37758, -0.479426)--(0.553871, 0.998548)--(0.0685467, 0.902135)--(1.37758, 0.479426);
			\draw[black,fill=black](1.37758, 0.479426) circle (0.02);
		\draw[black,fill=black](0.553871, -0.998548) circle (0.02);
		\draw[black,fill=black](0.0685467, -0.902135) circle (0.02);
		\draw[black,fill=black](1.37758, -0.479426) circle (0.02);
		\draw[black,fill=black](0.553871, 0.998548) circle (0.02);
		\draw[black,fill=black](0.0685467, 0.902135) circle (0.02);
		\draw[gray,fill=gray!50](0.900587, -0.37644) circle (0.02);
		\draw[gray,fill=gray!50](-0.223582, -0.844102) circle (0.02);
		\draw[gray,fill=gray!50](0.349375, -0.811451) circle (0.02);
		\draw[gray,fill=gray!50](0.900587, 0.37644) circle (0.02);
		\draw[gray,fill=gray!50](-0.223582, 0.844102) circle (0.02);
		\draw[gray,fill=gray!50](0.349375, 0.811451) circle (0.02);
		
	\end{tikzpicture}
	\caption{Each conic from the confocal family is inscribed in hexagons whose vertices belong to the circle. The foci are symmetric with respect to the circle.}\label{fig:hex}
\end{figure}

\noindent
\emph{Alternative geometric proof of one direction of Theorem \ref{th:isorotational6}.}

Denote by $O$ and $r$ the center and the radius of $\Gamma$.
Let $F_1$ be a point of $\Gamma$; $\C$ an ellipse with foci $F_1$ and $O$; $A$ an intersection point of $\Gamma$ and $\C$;
$B$ the point of $\Gamma$ such that $AB$ is tangent to $\C$;
 $E$, $D$ the points symmetric to $A$, $B$ respectively with respect to $OF_1$;

Our goal is to show that closed polygonal line $ABDEDBA$ is circumscribed about $\mathcal C$.

Denote by $C$ the intersection of $BD$ and the line $F_1O$. We want to show that $C$ and $A$ belong to the same ellipse with foci $F_1$ and $O$. Denote $F_1A=:\ell$. We observe that $F_1A+OA=\ell+r$. Similarly, $F_1C+OC=r+2OC$. Thus, we need to show that $2OC=\ell$.

 Denote by $G$ any point on the line $AB$, which is at the opposite to $B$ side of $A$. Observe that $\angle OAB=\angle F_1AG$ and $\angle OAB=\angle OBA$. The first fact follows from the focal property of ellipses and the second from $\triangle OAB$ being isosceles, since $O$ is the center of the circle $\Gamma$. From there we get $F_1A||OB$.

Let $OO_1$ be the altitude of the isosceles $OF_1A$. Construct  the normal line from $O_1$ to $F_1O$, with $O_2$ denoting  the footing of the normal at $F_1O$. We have that
$\angle F_1O_1O_2=\angle OBC$ as angles with parallel sides. Similarly, $\angle F_1O_1O_2=\angle F_1OO_1$ as angles with orthogonal sides. Thus, $\angle F_1OO_1=\angle OBC$.

We showed that the triangles $\triangle F_1OO_1$ and $\triangle OBC$ are congruent. Thus, $F_1O_1=OC $ and we finally get:
$$
OC=\frac{1}{2}F_1A.
$$

Similar proof is valid when the conic is a hyperbola.
\hfill$\Box$\medskip

\begin{remark}
In the case when $\C$ is ellipse, we assume in the proof above that $\Gamma$ is not entirely contained within the ellipse $\C$.
Thus, the proof implies that the circle
and the ellipse have at most two intersection points.
\end{remark}

\section{Do isoperiodic families  exist for $n\ne4,6$?}\label{sec:noisorot}

Any Poncelet $n$-polygon can be also considered as a closed $mn$-polygon, for any $m\in\mathbb{N}$, where its edges and vertices are repeated $m$ times each.
This is our motivation to refine the notion of $n$-Poncelet pairs from Definition \ref{def:n-pair}.

\begin{definition}\label{def:n-pair-strict}
We will say that conics $\Gamma$ and $\C$ are a \emph{strict $n$-Poncelet pair} or that $\C$ is \emph{strictly $n$-inscribed in $\Gamma$} if the following conditions are satisfied:
\begin{itemize}
	\item there is a polygon with $n$ sides inscribed in  $\Gamma$ and circumscribed about $\C$;
	\item for any $m$ that divides $n$ and distinct from $n$, there is no polygonal line with $m$ sides inscribed in $\Gamma$ and circumscribed about $\C$.
\end{itemize}
\end{definition}

Notice that Cayley's conditions also reflect the existence of strict and non-strict Poncelet pairs of conics, since, for non-prime numbers, thos conditions have factors corresponding to the divisors of the numbers, see \cite{Cayley1861}.
For example, in the calculations of this paper, we can see that the condition for hexagons \eqref{eq:hex} has a factor corresponding to triangles, i.e.~$p_6(t)=p_3(t)q_6(t)$, for some polynomial $q_6(t)$.

More generally, for $m|n$, we have that $p_{m}(t)=0$ implies $p_{n}(t)=0$.
We will denote by $q_n(t)$ the largest factor of $p_n(t)$ which is co-prime with all polynomials $p_m(t)$, where $m|n$ and $m<n$.

\begin{proposition}\label{prop:strict-pair}
For $t\notin\{a, b\}$, the conics
$\Gamma$ and $\C(t)$ form a strict $n$-Poncelet pair if and only if $q_n(t)=0$.
\end{proposition}

\begin{lemma}\label{lema:a,b}
Suppose that $\Gamma$ and $\C(t)$ form an $n$-Poncelet pair for infinitely many values of $t$.
If $m$ is co-prime with $n$, then all values of $t$ that satisfy the Cayley's condition for $m$ belong to the set $\{a,b\}$.
\end{lemma}
\begin{proof}
Notice first that, in the condition $p_n(t)=0$ for the $n$-Poncelet pair, $p_n(t)$ is a polynomial in $t$, where the coefficients are polynomials in $a$, $b$, $x_0^2$, $y_0^2$.
If the polynomial $p_n(t)$ has infinitely many zeroes, then it must be identically equal to zero.
This means that $\Gamma$ and $\C(t)$ form an $n$-Poncelet pair in the complexified plane for each $t\in\mathbb{C}\setminus\{a,b\}$.

From there, if $m$ and $n$ are co-prime, the polynomial $p_{m}(t)$ cannot have any zeroes in $\mathbb{C}$, other than $a$ and $b$.
\end{proof}

\begin{proposition}\label{prop:singular}
Suppose that $\Gamma$ and $\C(t)$ form a strict $n$-Poncelet pair for infinitely many values of $t$.
If $m$ is not a multiple of $n$, then all values of $t$ that satisfy the Cayley's condition $p_m(t)=0$ must be in the set $\{a,b\}$.
\end{proposition}
\begin{proof}
Similarly as in the proof of Lemma \ref{lema:a,b}, we can show that the polynomial $q_n(t)$ is identically equal to zero.
This implies that $\Gamma$ and $\C(t)$ form a strict $n$-Poncelet pair in the complexified plane for each $t\in\mathbb{C}\setminus \{a, b\}$.

If for some $t\in\mathbb{C}\setminus \{a, b\}$, there exists $m\ne n$ such that $p_{m}(t)=0$, then either $m|n$ or $n|m$. The former is impossible due to the definition of a strict $n$-Poncelet pair and the latter by the assumption of the Proposition, which concludes the proof.
\end{proof}

\begin{lemma}\label{lema:t3=a}
If $t_3=a$, then $\Gamma$ and $\C(t)$ form a $4$-Poncelet pair for each $t$.
\end{lemma}
\begin{proof}
We have that $t_3=a$ is equivalent to:
$$
(x_0^2+y_0^2-a+b-1)^2+4(a-b)y_0^2=0.
$$	
Since $a\ge b$, that implies $(a-b)y_0=0$ and $x_0^2+y_0^2-a+b-1=0$.
If $a=b$, we get $x_0^2+y_0^2=1$, i.e.~$\C(t)$ is a family of concentric circles with the centre on $\Gamma$.
If $y_0=0$, we have $x_0^2=a-b+1$.

According to the discussion from Section \ref{sec:k=4}, each of these conclusions implies that $\Gamma$ and $\C(t)$ form a $4$-Poncelet pair for each $t$.
\end{proof}

\begin{theorem}\label{th:noisorot}
If $\Gamma$ and $\mathcal{C}(t)$ form $n$-Poncelet pairs for infinitely many values of $t$, then  $n\in\{4,6\}$.
\end{theorem}
\begin{proof}
Suppose that $n$ is a number not equal to $4$ or $6$, such that $\Gamma$ and $\mathcal{C}(t)$ form $n$-Poncelet pairs for infinitely many values of $t$.
Based on Corollary \ref{cor:k3}, there is a unique value $t=t_3$, which if given by \eqref{eq:t3}, such that the Cayley condition for triangles inscribed in $\Gamma$ and circumscribed about $\mathcal{C}(t)$ is satisfied.
Thus $n\neq3$.

Proposition \ref{prop:singular} and Lemma \ref{lema:t3=a} then imply $t_3=b$.

According to Corollary \ref{cor:k4}, there is a quadrangle inscribed in $\Gamma$ and circumscribed about $\mathcal{C}(t)$ if and only if $\alpha_4+\beta_4t=0$. After substituting $t_3=b$, this is equivalent to:
$$
(b-t) \left(1-a+b-x_0^2-y_0^2\right)=0.
$$
Since $n\neq4$, we have:
\begin{equation}\label{eq:inequality}
1-a+b-x_0^2-y_0^2\neq0.
\end{equation}
Thus, the equation $\alpha_4+\beta_4t=0$ has a unique solution $t=b$.

{ Let us consider now Poncelet pentagons.}
Since the coefficients $C_2(t)$ and $C_3(t)$ both take values $0$ for $t=b$, that means that $t=b$ is also a solution of the cubic equation from Corollary \ref{cor:k5}, { describing Poncelet pentagon}, i.e.
$$
\alpha_5+\beta_5b+\gamma_5b^2+\delta_5b^3=0.
$$
Substituting $t_3=b$ in the expressions from Corollary \ref{cor:k5} { for $\alpha_5$, $\beta_5$, $\gamma_5$, and $\delta_5$}, we get:
\begin{equation}\label{eq:poly5}
\alpha_5+\beta_5t+\gamma_5t^2+\delta_5t^3
=
-64 (b-t)^2\times
\left((t-a) \left(2 a-2 b+2 x_0^2+2 y_0^2-1\right)+a-b\right).
\end{equation}
Thus $n=5$ only if:
$$
2 a-2 b+2 x_0^2+2 y_0^2-1=a-b=0,
$$
which implies $2x_0^2+2y_0^2=1$.
Under those conditions, we calculate that $t_3=b-\frac1{16}$, which is in contradiction with the assumption $t_3=b$.
We conclude that $n\neq5$.

The equality \eqref{eq:poly5} shows that
the polynomial $\alpha_5+\beta_5t+\gamma_5t^2+\delta_5t^3$ has a double root at $t=b$.
{
Considering} { the third root $t_5$ of that polynomial, { if it exists,} we have the following:
\begin{itemize}
	\item either $t_5\in\{a,b\}$;
	 or
	\item the polynomial $\alpha_5+\beta_5t+\gamma_5t^2+\delta_5t^3$ does not have the third root, i.e.~$\delta_5=0$.
\end{itemize}
}
We split those possibilities into three subcases.

\emph{Case 1: The third root of  $\alpha_5+\beta_5t+\gamma_5t^2+\delta_5t^3$ equals $a$.}
 From \eqref{eq:poly5}, we see that then $a=b$.
Substituting that into $t_3=b$, we get $x_0^2+y_0^2=1$, which implies $n=4$.

\emph{Case 2: The third root of  $\alpha_5+\beta_5t+\gamma_5t^2+\delta_5t^3$ equals $b$.}
Relation \eqref{eq:poly5} then implies $a-b+x_0^2+y_0^2-1=0$, which is in contradiction with \eqref{eq:inequality}.

\emph{Case 3: The polynomial $\alpha_5+\beta_5t+\gamma_5t^2+\delta_5t^3$ does not have the third root, i.e.~$\delta_5=0$.}
That means:
$2 a-2 b+2 x_0^2+2 y_0^2-1=0$.
Combining that with $t_3=b$, we get
$16(a - b) x_0^2=1$ and $16(a-b)y_0^2=-(4 a-4 b-1)^2$.
Since $a\ge b$, those two equations give $4a-4b-1=0$, $y_0=0$, and $x_0=\pm\frac12$.
According to the discussion from Section \ref{sec:k=6}, we have $n=6$.

This concludes the proof.
\end{proof}

\section{Poncelet pairs of conics and Blaschke ellipses}\label{sec:blaschke}

\subsection{Properties of the Blaschke transformations}

In this section, we are going to review basic properties of Blaschke transformations in a way we will need it later.
For a more comprehensive overview, see e.g.~\cite{DGSV} and references therein.
Let us denote
\begin{equation}\label{eq:Blaschke}
	B_p(z)=\frac{p-z}{1-z\bar p}
\end{equation}
a M\"obius transformation for a given $p\in \mathbb{C}$.

\begin{lemma} Let $z, w, v\in  \mathbb{C}$, $z\ne \bar w$, $z\bar v\ne 1$, $v\bar w\ne 1$. Then
	$$
	\frac{1-B_w(z)\bar {B_w(v)}}{1-z\bar v}=\frac{1-|w|^2}{(1-z\bar w)(1-\bar v w)}.
	$$
\end{lemma}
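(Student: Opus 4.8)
The plan is to verify the identity by a direct computation, since both sides are rational expressions in $z,v,w$ and their complex conjugates. First I would substitute the definition \eqref{eq:Blaschke}, writing $B_w(z)=\frac{w-z}{1-z\bar w}$ and $\overline{B_w(v)}=\frac{\bar w-\bar v}{1-\bar v w}$, and bring the left-hand numerator $1-B_w(z)\overline{B_w(v)}$ over the common denominator $(1-z\bar w)(1-\bar v w)$. The stated inequalities $z\ne\bar w$, $z\bar v\ne 1$, $v\bar w\ne 1$ are precisely what guarantees that this denominator, together with the factor $1-z\bar v$ by which we eventually divide, does not vanish, so that every manipulation below is legitimate.

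The heart of the argument is then to expand the numerator
$$
(1-z\bar w)(1-\bar v w)-(w-z)(\bar w-\bar v)
$$
and check that it collapses to $(1-|w|^2)(1-z\bar v)$. Expanding the first product gives $1-\bar v w-z\bar w+z\bar v|w|^2$, and the second gives $|w|^2-w\bar v-z\bar w+z\bar v$; subtracting, the two $z\bar w$ terms cancel, while the terms $-\bar v w$ and $+w\bar v$ cancel because complex multiplication is commutative, i.e.\ $\bar v w=w\bar v$. What survives is $1-|w|^2$ together with $z\bar v|w|^2-z\bar v=-z\bar v(1-|w|^2)$, which factors precisely as
$$
(1-|w|^2)(1-z\bar v).
$$

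Putting this together yields
$$
1-B_w(z)\overline{B_w(v)}=\frac{(1-|w|^2)(1-z\bar v)}{(1-z\bar w)(1-\bar v w)},
$$
and dividing both sides by the nonzero factor $1-z\bar v$ gives the claimed formula. There is no genuine conceptual obstacle here: the only thing to watch is the bookkeeping in the expansion, and in particular remembering that $\bar v w$ and $w\bar v$ denote the same quantity, since it is exactly this cancellation that is responsible for the clean factorization of the numerator. (Conceptually, the identity is the invariance of the pseudo-hyperbolic kernel $1-z\bar v$ under the Blaschke transformation $B_w$, which is why it will be convenient in the later applications.)
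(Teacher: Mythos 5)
Your proof is correct and follows essentially the same route as the paper's: substitute the definition of $B_w$, put $1-B_w(z)\overline{B_w(v)}$ over the common denominator $(1-z\bar w)(1-\bar v w)$, and check that the numerator factors as $(1-|w|^2)(1-z\bar v)$. In fact your expansion is written out more carefully than the paper's one-line computation, whose final displayed numerator contains a typo (it reads $(1-z\bar w)(1-|\bar w|^2)$ where it should read $(1-z\bar v)(1-|w|^2)$, exactly the factorization you obtained).
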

\begin{proof} It follows by straightforward calculation:
	$$
	1-B_w(z)\bar {B_w(v)}=\frac{(1-z\bar w)(1-\bar v w)-(z-w)(\bar v-\bar w)}{(1-z\bar w)(1-\bar v w)}=\frac{(1-z\bar w)(1-|\bar w|^2)}{(1-z\bar w)(1-\bar v w)}.
	$$
\end{proof}
\begin{corollary} For $v=z$ it holds:
	\begin{equation}
		\frac{1-|B_w(z)|^2}{1-|z|^2}=\frac{1-|w|^2}{|1-z\bar w|^2}.
	\end{equation}
\end{corollary}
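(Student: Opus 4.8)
The plan is to obtain this identity as an immediate specialization of the preceding Lemma by setting $v=z$. First I would check that the substitution is legitimate: with $v=z$ the Lemma's hypotheses $z\ne\bar w$, $z\bar v\ne 1$, $v\bar w\ne 1$ collapse to $z\ne\bar w$, $|z|\ne 1$, and $z\bar w\ne 1$, all of which hold automatically when $z$ and $w$ lie in the open unit disk (the regime relevant for Blaschke products). Having secured this, I would substitute $v=z$ directly into the Lemma's formula. The left-hand numerator $1-B_w(z)\overline{B_w(v)}$ becomes $1-B_w(z)\overline{B_w(z)}=1-|B_w(z)|^2$, and the denominator $1-z\bar v$ becomes $1-z\bar z=1-|z|^2$, so the left-hand side is already $\frac{1-|B_w(z)|^2}{1-|z|^2}$ as desired.

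The only genuine manipulation is on the right-hand side, where the factor $1-\bar v w$ becomes $1-\bar z w$. Here I would use that $\overline{1-z\bar w}=1-\bar z w$, so $1-\bar z w$ is exactly the complex conjugate of $1-z\bar w$; hence the product $(1-z\bar w)(1-\bar z w)=(1-z\bar w)\,\overline{(1-z\bar w)}=|1-z\bar w|^2$. This turns the right-hand side of the Lemma into $\frac{1-|w|^2}{|1-z\bar w|^2}$ and yields the claimed formula. I do not anticipate any real obstacle: the entire content resides in the Lemma, and the corollary is just its diagonal case $v=z$ combined with the elementary observation that two of the denominator factors are complex conjugates of one another, which replaces their product by a squared modulus.
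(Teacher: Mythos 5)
Your proposal is correct and coincides with the paper's treatment: the paper offers no separate proof of this corollary precisely because it is the specialization $v=z$ of the preceding Lemma, combined with the observation that $(1-z\bar w)(1-\bar z w)=(1-z\bar w)\overline{(1-z\bar w)}=|1-z\bar w|^2$, exactly as you argue. One caveat on your hypothesis check: the claim that $z\ne\bar w$ holds automatically for $z,w$ in the open unit disk is false (take $z=w$ real, e.g.\ $z=w=\tfrac12$); this is harmless, however, because that hypothesis is evidently a misprint in the Lemma for $z\bar w\ne 1$ (the condition making $B_w(z)=\frac{w-z}{1-z\bar w}$ well defined, which is what the Lemma's computation actually uses), and with that reading your reduction of the hypotheses to $|z|\ne 1$ and $z\bar w\ne 1$ is exactly right. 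Note also that the paper later applies this corollary (in the corollary on M\"obius transformations) to points with $|z|>1$ or $|w|\ge 1$, so one should not genuinely restrict to the disk: the identity holds whenever $|z|\ne 1$ and $z\bar w\ne 1$, which your substitution argument in fact establishes without any disk assumption.
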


\begin{corollary}\label{cor:Mobius2} The M\"oblius transformations \eqref{eq:Blaschke} have the following properties:
	\begin{itemize}
		\item If $|z|<1$ and $|w|<1$ then $|B_w(z)|<1$.
		\item If $|z|<1$ and $|w|\ge 1$ then $|B_w(z)|\ge 1$.
		\item If $|z|>1$ and $|w|<1$ then $|B_w(z)|>1$.
		\item If $|z|>1$ and $|w|>1$ then $|B_w(z)|<1$.

	\end{itemize}
\end{corollary}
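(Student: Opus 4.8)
The plan is to deduce all four statements directly from the identity established in the preceding corollary,
$$
\frac{1-|B_w(z)|^2}{1-|z|^2}=\frac{1-|w|^2}{|1-z\bar w|^2},
$$
which already encodes everything we need. The key observation I would exploit is that the denominator $|1-z\bar w|^2$ on the right is a strictly positive real number whenever $B_w(z)$ is defined (that is, whenever $z\bar w\ne 1$). Hence the sign of the right-hand side equals the sign of $1-|w|^2$, and so the sign of the quotient $(1-|B_w(z)|^2)/(1-|z|^2)$ is dictated solely by whether $|w|$ is below, equal to, or above $1$.

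From here the proof becomes pure sign bookkeeping across the four cases. If $|z|<1$ and $|w|<1$, both $1-|z|^2$ and $1-|w|^2$ are positive, so the quotient is positive; as its denominator $1-|z|^2>0$, the numerator $1-|B_w(z)|^2$ is positive, giving $|B_w(z)|<1$. The case $|z|>1$, $|w|<1$ is the mirror image: the quotient stays positive but now $1-|z|^2<0$, forcing $1-|B_w(z)|^2<0$ and hence $|B_w(z)|>1$. For $|z|>1$, $|w|>1$ we have $1-|w|^2<0$, so the right-hand side is negative; together with $1-|z|^2<0$ this yields $1-|B_w(z)|^2>0$, i.e. $|B_w(z)|<1$.

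The only case I expect to demand a moment's care is the second one, $|z|<1$ with $|w|\ge 1$, since here the conclusion is the nonstrict inequality $|B_w(z)|\ge 1$. I would separate the boundary value $|w|=1$: then $|z\bar w|=|z|<1$, so $z\bar w\ne 1$, the identity applies, and its right-hand side vanishes, giving $|B_w(z)|=1$. For $|w|>1$ the right-hand side is negative while $1-|z|^2>0$, so $1-|B_w(z)|^2<0$ and $|B_w(z)|>1$; in the single exceptional situation $z\bar w=1$ (which can occur only when $|z|=1/|w|<1$) the map $B_w$ has its pole at $z$, so $|B_w(z)|=\infty>1$. These subcases combine to the claimed $|B_w(z)|\ge 1$. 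I anticipate no genuine obstacle: the whole argument is a sign analysis of a single identity, and the only subtlety is this boundary-and-pole edge case, resolved by the explicit splitting just described.
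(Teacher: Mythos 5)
Your proof is correct and is exactly the argument the paper intends: the corollary is stated without proof immediately after the identity $\frac{1-|B_w(z)|^2}{1-|z|^2}=\frac{1-|w|^2}{|1-z\bar w|^2}$, and the intended derivation is precisely the sign analysis you carry out, with the right-hand denominator positive forcing the sign of $1-|B_w(z)|^2$ in each case. One small point: the pole $z\bar w=1$ that you handle in the case $|z|<1$, $|w|>1$ can equally occur in the case $|z|>1$, $|w|<1$ (take $z=1/\bar w$), and the same convention $|B_w(z)|=\infty$ disposes of it there as well.
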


\begin{lemma} \label{lemma:Blaschke2}
	For given $p, q$, $|p|<1, |q|<1$ there exists a unique $c$ such that $B_c(p)=q$. For such a $c$, it is $|c|<1$.
\end{lemma}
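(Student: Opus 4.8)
The plan is to convert the single complex equation $B_c(a)=b$ into a linear system for the pair $(c,\bar c)$, solve it explicitly, and then read off both the uniqueness and the bound $|c|<1$ from the resulting formula. The point to keep in mind is that $B_c(a)=\frac{c-a}{1-a\bar c}$ is \emph{not} holomorphic in the parameter $c$, so one cannot simply invert a M\"obius map; recognizing that adjoining the conjugate equation linearizes the problem is the crucial move.

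First I would clear the denominator in $\frac{c-a}{1-a\bar c}=b$ (legitimate as long as $1-a\bar c\neq0$, which is necessary for the left-hand side to be defined and will be reconfirmed a posteriori once $|c|<1$ is known) to obtain
$$
c+ab\,\bar c=a+b.
$$
This relation, together with its complex conjugate $\bar a\bar b\,c+\bar c=\bar a+\bar b$, is a linear system in $c$ and $\bar c$ whose coefficient determinant equals $1-|a|^2|b|^2$. Since $|a|<1$ and $|b|<1$, this determinant is strictly positive, so Cramer's rule produces a unique $c$; this already gives existence and uniqueness. Solving yields the explicit expression
$$
c=\frac{a\,(1-|b|^2)+b\,(1-|a|^2)}{1-|a|^2|b|^2}.
$$

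It then remains to check $|c|<1$. Writing $r=|a|$ and $s=|b|$, the key computation is that, since $1-s^2\ge0$ and $1-r^2\ge0$, the triangle inequality bounds the numerator while the denominator factors as $(1-rs)(1+rs)$, so that
$$
|c|\le\frac{r(1-s^2)+s(1-r^2)}{1-r^2s^2}=\frac{r+s}{1+rs}.
$$
The decisive step is the elementary inequality $\frac{r+s}{1+rs}<1$, which rearranges to $0<(1-r)(1-s)$ and therefore holds exactly because $r<1$ and $s<1$. This proves $|c|<1$, which in turn guarantees $1-a\bar c\neq0$, so the initial clearing of denominators was valid and $B_c(a)=b$ genuinely holds.

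The main obstacle is thus conceptual (linearizing a non-holomorphic equation) rather than computational, the only real content being the factorization that collapses the bound to $\tfrac{r+s}{1+rs}$. As an alternative route for the final inequality, one could instead use the involution identity $B_c\circ B_c=\mathrm{id}$ (a one-line verification), which turns $B_c(a)=b$ into $B_c(b)=a$; then Corollary \ref{cor:Mobius2}, applied with $|b|<1$, forces $|c|<1$, since $|c|\ge1$ would give $|B_c(b)|\ge1$, contradicting $B_c(b)=a$ with $|a|<1$.
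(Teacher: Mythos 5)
Your proof is correct, and its first half is essentially the paper's own argument: the paper also linearizes $B_c(a)=b$, as a $2\times 2$ real linear system in $(\Re c,\Im c)$ rather than your complex pair $(c,\bar c)$, and obtains the same determinant $1-|a|^2|b|^2\neq 0$, whence existence and uniqueness. Where you diverge is the bound $|c|<1$: the paper simply invokes Corollary \ref{cor:Mobius2}, while you push Cramer's rule to the explicit formula
$c=\bigl(a(1-|b|^2)+b(1-|a|^2)\bigr)/\bigl(1-|a|^2|b|^2\bigr)$
and verify $|c|\le\frac{r+s}{1+rs}<1$ directly via the factorization $(r+s)(1-rs)$ of the numerator bound. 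Your route costs a little algebra but buys two things: an explicit expression for $c$, and a clean resolution of a point the paper glosses over, namely that the solution of the linearized system actually satisfies the original equation --- one must know $1-a\bar c\neq 0$ before dividing back, and your a posteriori check via $|c|<1$ supplies exactly that, whereas the paper's appeal to Corollary \ref{cor:Mobius2} tacitly presumes $B_c(a)=b$ already holds for the solution of the linear system. Finally, your alternative ending is in substance the paper's ending, except that the involution $B_c\circ B_c=\mathrm{id}$ is an unnecessary detour: Corollary \ref{cor:Mobius2} applies directly to $B_c(a)=b$ (if $|c|\ge 1$ and $|a|<1$ then $|B_c(a)|\ge 1$, contradicting $|b|<1$), with no need to first pass to $B_c(b)=a$.
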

\begin{proof}
	For given $p, q$ the condition $B_c(p)=q$ leads to a system of two linear equations on two real variables $c_1=\Re c, c_2=\Im c$. The determinant of that system
	is
	$$
	\Delta=1-|p|^2|q|^2.
	$$
	Thus, from  $|p|<1, |q|<1$ it follows that $\Delta \ne 0$, and the system has a unique solution, $c$. From Corollary \ref{cor:Mobius2} it follows that $c<1$.
\end{proof}

Let us denote $\mathbb T=\{z||z|=1\}$ the unit circle and $\mathbb D=\{z||z|<1\}$ the open unit disk bounded by $\mathbb T$.

The transformations defined by \eqref{eq:Blaschke} are called the Blaschke transformations. For $|p|<1$, $B_p$ is an automorphism of $\mathbb D$, see \cite{DGSV}. Composition of $n$ Blaschke transformations defines a Blaschke product $B$ of degree $n$. Following \cite{DGSV}, one associates to $B$ a Blaschke curve as the caustic of all polygons with vertices at the solutions of $B(z)=\lambda$, for each $\lambda \in \mathbb T$.
Let us recall two statements from \cite{DGSV}.

\begin{proposition}[Theorem 2.9, \cite{DGSV}]\label{prop:th2.9}
	Let $B$ be a Blaschke product of degree $3$ with zeros $0, p, q$. For $\lambda \in \mathbb T$, let $z_1, z_2, z_3$ denote the three distinct solutions to $B(z)=\lambda$. Then the lines joining $z_j$ and $z_k$, ($j\ne k$) are tangent to the ellipse given by
	$$
	|w-p|+|w-q|=|1-\bar p q|.
	$$
\end{proposition}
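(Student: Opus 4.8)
The plan is to show that each side of the triangle, i.e.\ each chord $z_jz_k$, is tangent to the ellipse $E:\ |w-a|+|w-b|=|1-\bar a b|$, and to detect tangency through the reflection property of an ellipse. Writing $B(z)=u\,z\,B_a(z)B_b(z)$ for a unimodular constant $u$ (the product is only determined up to such a factor), I first note that $E$ is a genuine ellipse: since $|a|,|b|<1$,
\[
|1-\bar a b|^2-|a-b|^2=(1-|a|^2)(1-|b|^2)>0,
\]
so the major axis length $2\alpha:=|1-\bar a b|$ exceeds the focal distance $|a-b|$. The reflection property says that a line $\ell$ is tangent to $E$ if and only if the mirror image $a'$ of the focus $a$ across $\ell$ satisfies $|a'-b|=2\alpha$ (with strict $<$ for secants and $>$ for disjoint lines). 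For the chord through $z_1,z_2\in\mathbb T$ the reflection is given by the standard unit-circle formula $a'=z_1+z_2-z_1z_2\bar a$, so the whole problem reduces to verifying the single identity $\bigl|\,z_1+z_2-z_1z_2\bar a-b\,\bigr|=|1-\bar a b|$.

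Next I would produce the algebraic constraints on $z_1,z_2,z_3$. Clearing denominators in $B(z)=\lambda$ turns it into the monic cubic
\[
z^3-\bigl((a+b)+\mu\bar a\bar b\bigr)z^2+\bigl(ab+\mu(\bar a+\bar b)\bigr)z-\mu=0,
\qquad \mu:=\lambda/u,\ |\mu|=1,
\]
whose Vieta relations give $z_1+z_2+z_3=(a+b)+\mu\bar a\bar b$ and $z_1z_2z_3=\mu$. Substituting $z_1+z_2=(a+b)+\mu\bar a\bar b-z_3$ and $z_1z_2=\mu/z_3=\mu\bar z_3$, and using $\bar z_3=1/z_3$, the reflected point collapses to the symmetric form
\[
z_1+z_2-z_1z_2\bar a-b=(a-z_3)+\mu\,\bar a\,\overline{(b-z_3)},
\]
so the chord $z_1z_2$ is encoded purely through the excluded third root $z_3$.

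The last step uses that $z_3$ itself solves $B(z)=\lambda$ and lies on $\mathbb T$. Setting $p=a-z_3,\ q=b-z_3$, the defining equation at $z_3$ together with $\bar z_3=1/z_3$ yields the phase relation $pq=\mu z_3\,\overline{pq}$. Expanding $|p+\mu\bar a\bar q|^2=|p|^2+|a|^2|q|^2+2\,\Re(\bar\mu\,a\,pq)$ and rewriting the cross term via the phase relation, every term involving $z_3$ cancels and one is left with
\[
1+|a|^2|b|^2-2\,\Re(a\bar b)=|1-\bar a b|^2,
\]
which is exactly the desired tangency identity. By the symmetry of the three roots, the identical computation applies to the chords $z_2z_3$ and $z_1z_3$ (designating $z_1$, resp.\ $z_2$, as the excluded root), so all three sides of the triangle are tangent to $E$.

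I expect the only real obstacle to be bookkeeping in the final cancellation: one must consistently exploit $z_3\in\mathbb T$, both through $\bar z_3=1/z_3$ and through the phase relation $pq=\mu z_3\,\overline{pq}$ coming from $B(z_3)=\lambda$, to eliminate all $z_3$-dependence. One must also apply the reflection equivalence in the direction ``$|a'-b|=2\alpha\Rightarrow\ell$ tangent'' rather than an inequality, which is legitimate here precisely because the computation delivers equality on the nose. A conceptual alternative worth recording is to replace the Vieta/phase manipulation by the closed-form expression for $1-B_w(z)\overline{B_w(v)}$ established earlier in this section, which lets one recognise the tangency condition intrinsically; but the reflection approach above is self-contained and reduces the Proposition to one short modulus computation.
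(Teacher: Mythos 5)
This proposition is not proved in the paper at all: it is recalled verbatim from \cite{DGSV} (Theorem 2.9 there) as background material, so there is no internal argument to compare yours against, and your proof must be judged on its own merits. It is correct. I checked the key steps: (i) the map $w\mapsto z_1+z_2-z_1z_2\bar w$ is indeed the reflection in the chord through $z_1,z_2\in\mathbb T$ (it is anti-holomorphic, involutive, and fixes $z_1,z_2$); (ii) the trichotomy behind your reflection criterion is sound --- for a secant one gets $|a'-b|<|1-\bar a b|$ whether or not the chord separates the foci (in the separating case $|a'-b|\le |a-b|$), for a disjoint line $|a'-b|>|1-\bar a b|$, so equality does force tangency, which is the direction you need and correctly flag; (iii) the Vieta relations for the cubic $z^3-((a+b)+\mu\bar a\bar b)z^2+(ab+\mu(\bar a+\bar b))z-\mu=0$ and the reduction $z_1+z_2-z_1z_2\bar a-b=(a-z_3)+\mu\bar a\,\overline{(b-z_3)}$ via $z_1z_2=\mu\bar z_3$ are right; (iv) the phase relation $pq=\mu z_3\overline{pq}$ follows from $B(z_3)=\lambda$ upon writing $1-\bar a z_3=-z_3\overline{(a-z_3)}$ and $1-\bar b z_3=-z_3\overline{(b-z_3)}$; and (v) with it the cross term $2\Re(\bar\mu a pq)$ becomes $2\Re(a z_3\bar p\bar q)$, whose expansion cancels every $z_3$-dependent term against $|p|^2+|a|^2|q|^2$, leaving exactly $1+|a|^2|b|^2-2\Re(a\bar b)=|1-\bar a b|^2$. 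Two facts you use tacitly should be stated explicitly: all three solutions of $B(z)=\lambda$ lie on $\mathbb T$ (immediate from $|B|<1$ on $\mathbb D$ and $|B|>1$ outside the closed disk, in the spirit of Corollary \ref{cor:Mobius2}), which is what legitimizes both the chord-reflection formula and $\bar z_3=1/z_3$; and the nondegeneracy $|1-\bar a b|^2-|a-b|^2=(1-|a|^2)(1-|b|^2)>0$, which you do record. Your focal-reflection route is self-contained and reduces each side of the triangle to a single modulus identity, which is an efficient alternative to the computational tangency verification carried out in \cite{DGSV}.
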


\begin{proposition} [Corollary 13.9, \cite{DGSV}]
	Let $B$ be a Blaschke product of degree $4$. Then the associated Blaschke curve is an ellipse if and only if $B$ is decomposable.
\end{proposition}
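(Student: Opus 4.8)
The plan is to translate the statement into the geometry of the inscribed quadrilaterals $B^{-1}(\lambda)$, $\lambda\in\mathbb T$, and to isolate a single projective invariant — the concurrency of the two diagonals — that simultaneously governs decomposability and the ellipticity of the Blaschke curve.

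First I would record the degree-two input. For a degree-two Blaschke product $g$ with zeros $a,b\in\mathbb D$ and $\lambda\in\mathbb T$, the two solutions $z_1,z_2$ of $g(z)=\lambda$ satisfy $z_1z_2=(ab-\lambda)/(1-\bar a\bar b\lambda)$ and $z_1+z_2=(a+b-\lambda(\bar a+\bar b))/(1-\bar a\bar b\lambda)$, so the chord equation $z+z_1z_2\bar z=z_1+z_2$ of $\mathbb T$ becomes, after clearing $\lambda$, a condition satisfied for all $\lambda$ at the single interior point $p_g=\big(a(1-|b|^2)+b(1-|a|^2)\big)/(1-|a|^2|b|^2)\in\mathbb D$. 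Thus every chord $\overline{z_1z_2}$ passes through the fixed pivot $p_g$, and the fibre involution $\sigma_g\colon z\mapsto z'$ (the other preimage) is exactly the involution of $\mathbb T$ cut out by lines through $p_g$; it is the restriction of the Möbius deck transformation of $g\colon\hat{\mathbb C}\to\hat{\mathbb C}$.

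Next I would prove the key equivalence: $B$ is decomposable if and only if the diagonals of the quadrilaterals $B^{-1}(\lambda)$ all pass through one fixed point. If $B=B_1\circ B_2$, then $B^{-1}(\lambda)$ splits into the two $B_2$-fibres over $B_1^{-1}(\lambda)$; by the previous paragraph each such fibre is a chord through the pivot $p_{B_2}$, and these two chords are the diagonals, so they concur at $p_{B_2}$. Conversely, if all diagonals pass through a fixed $p\in\mathbb D$, let $\sigma$ be the Möbius involution of $\hat{\mathbb C}$ preserving $\mathbb T$ whose trace on $\mathbb T$ is the line-through-$p$ involution; $\sigma$ pairs the four points of each fibre along the diagonals, hence $B\circ\sigma=B$ on $\mathbb T$ and therefore identically, two rational functions agreeing on a circle. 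Factoring $B$ through the degree-two map $B_2$ that realizes $\sigma$ as its deck transformation yields $B=B_1\circ B_2$ with $B_1$ rational of degree two; here I would use Lemma \ref{lemma:Blaschke2} and Corollary \ref{cor:Mobius2} to check that $B_1$ maps $\mathbb T$ to $\mathbb T$ and has all its zeros inside $\mathbb D$, so that it is a genuine Blaschke product and $B$ is decomposable.

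For the direction ellipse $\Rightarrow$ decomposable I would then invoke the classical fact that the diagonals of Poncelet quadrilaterals inscribed in a conic and circumscribed about a fixed conic are concurrent at a fixed point; since the Blaschke curve is convex the quadrilaterals $B^{-1}(\lambda)$ are convex and this point lies in $\mathbb D$, so the equivalence above gives decomposability. The remaining, genuinely harder, direction is concurrency $\Rightarrow$ ellipse: knowing $B=B_1\circ B_2$ I must show that the four side-lines actually envelope a conic. The plan is to pass to the dual picture, parametrizing the side $\overline{z_1z_3}$ (with $B_2(z_1)=w_1$, $B_2(z_3)=\sigma_{B_1}(w_1)$) by its line-coordinates $(z_1+z_3,\,z_1z_3)$, and to show these satisfy a single quadratic relation as $\lambda$ runs over $\mathbb T$; equivalently, that the symmetry $\sigma_{B_2}$ forces the Blaschke curve of a quartic product, which in general has class three, to drop to class two. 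Convexity together with the reality of the configuration, guaranteed by Corollary \ref{cor:Mobius2}, then upgrades ``conic'' to ``ellipse.'' This class-drop computation is the main obstacle; as an alternative I would carry it out through the invariant-theoretic closure identity \eqref{eq:I4}, verifying that the decomposition makes its left-hand side vanish identically in $\lambda$, in the same spirit as the third proof of Theorem \ref{th:isorotational4}.
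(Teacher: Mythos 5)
You should know at the outset that the paper contains no proof of this statement: it is quoted verbatim as Corollary 13.9 of \cite{DGSV} (a theorem due to Fujimura), so your attempt has to stand entirely on its own. Much of it does stand. The pivot computation for a degree-two product is correct (your $p_g$ is indeed the common point of all fibre chords); the equivalence ``$B$ decomposable $\Leftrightarrow$ all diagonals of the fibres concur at a point of $\mathbb D$'' is argued soundly, since the chord involution through $p$ is exactly the M\"obius involution $z\mapsto (p-z)/(1-\bar p z)$ and your factorization of $B$ through the quotient by this involution is legitimate; and the implication \emph{ellipse} $\Rightarrow$ \emph{decomposable} then follows from the classical fact that the diagonals of a Poncelet $4$-gon family meet at a fixed point (a vertex of the common self-polar triangle of the two conics), which lies in $\mathbb D$ by convexity.

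The genuine gap is the converse implication, \emph{decomposable} $\Rightarrow$ \emph{ellipse}, which you yourself flag as ``the main obstacle'' and never close. Concurrency of the diagonals does not by itself force the sides to envelope a conic: any coupling of two chords through the fixed point $p$ produces inscribed quadrilaterals with concurrent diagonals, and for a generic coupling the envelope of the sides is not a conic; what must be exploited is the specific coupling induced by $B_1$. Your intended route requires two unproven inputs: that the envelope of all chords joining fibre points of a quartic Blaschke product is an algebraic curve of class three, and that the diagonal component (the dual line of $p$) splits off so that the residual dual curve is a conic -- this is precisely Fujimura's computation, and it is absent. The fallback via \eqref{eq:I4} cannot substitute for it: \eqref{eq:I4} is a single numerical relation among the invariants of a pair of \emph{already given} conics (there is no $\lambda$ in it that could ``vanish identically''), so to invoke it you must first produce a candidate conic caustic, and even then it would only certify that this pair admits \emph{some} Poncelet quadrilaterals, not that the fibre quadrilaterals of $B$ are the ones tangent to it. As written, your argument establishes only one direction of the stated equivalence; the other direction should either be proved by carrying out the dual-curve elimination or be cited to \cite{DGSV}, as the paper itself does.
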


\subsection{Poncelet triangles and Blaschke ellipses}

Now, we are going to study triangles inscribed in the unit circle and having a conic from a given confocal family as the caustic. Unlike \cite{DGSV} we, in principle, allow the possibility for the inscribed conic to be a hyperbola.

{
	
	\begin{theorem}\label{th:Bla3}
		Given two points $p, q\in \mathbb D$. Then there exists a unique conic $\C$ with the foci $p, q$, which is $3$-Poncelet inscribed in $\mathbb T$. Moreover, $\C$ is an ellipse. That ellipse is the Blaschke ellipse with the major axis of length $|1-\bar p q|$.
	\end{theorem}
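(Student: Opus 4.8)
The plan is to obtain existence (together with the ellipse property and the exact value of the major axis) from the Blaschke side, and to secure uniqueness from the Cayley-type condition of Corollary \ref{cor:k3}.

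For existence I would take the Blaschke product of degree three with zeros $0,a,b$, namely $B(z)=z\,B_a(z)\,B_b(z)$ up to a unimodular factor. By the cited Proposition (Theorem 2.9 of \cite{DGSV}), for every $\lambda\in\mathbb{T}$ the three solutions $z_1,z_2,z_3$ of $B(z)=\lambda$ lie on $\mathbb{T}$, and the three chords $z_jz_k$ are tangent to the fixed ellipse
$$
E:\quad |w-a|+|w-b|=|1-\bar a b|.
$$
As $\lambda$ runs over $\mathbb{T}$ these inscribed triangles sweep out a one-parameter Poncelet family all circumscribed about $E$, so $E$ is a $3$-Poncelet caustic with respect to $\mathbb{T}$ having foci $a,b$. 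To verify that $E$ is a genuine ellipse and not a degenerate or empty locus, I would record the identity
$$
|1-\bar a b|^2-|a-b|^2=(1-|a|^2)(1-|b|^2),
$$
whose right-hand side is strictly positive precisely because $a,b\in\mathbb{D}$. Hence the prescribed sum of focal distances $|1-\bar a b|$ exceeds the focal separation $|a-b|$, the locus $E$ is an ellipse with semi-major axis $\tfrac12|1-\bar a b|$, and its (full) major axis has length $|1-\bar a b|$. This is exactly the step that uses the hypothesis $a,b\in\mathbb{D}$ and that excludes the hyperbolic caustic contemplated in the remark preceding the theorem.

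For uniqueness I would pass to the coordinates of Section 2. A rigid motion carries the foci $a,b$ to the symmetric positions on the horizontal axis and the unit circle $\mathbb{T}$ to a unit circle centred at some point $(x_0,y_0)$; under this motion the totality of conics with foci $a,b$ becomes exactly the confocal pencil $\mathcal{C}(t)$, while both the Poncelet property and the type of the conic (ellipse versus hyperbola) are preserved. By Corollary \ref{cor:k3} the condition $C_2(t)=0$ for an inscribed triangle is \emph{linear} in $t$, so it has the single root $t_3$ given by \eqref{eq:t3}. Therefore at most one conic of the pencil can be a $3$-Poncelet caustic; combined with the existence just established, the Blaschke ellipse $E$ must be that unique conic, which proves all three assertions.

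The main obstacle I anticipate is conceptual rather than computational: reconciling the two frameworks. The Blaschke construction is intrinsically anchored to the unit disk centred at the origin, whereas the Cayley setup of Section 2 places the confocal family at the origin and the circle off-centre, so one must check that the connecting rigid motion genuinely identifies ``all conics with foci $a,b$'' with the pencil $\mathcal{C}(t)$ and respects the Poncelet closure; only then does the linear Cayley condition legitimately deliver uniqueness (in particular without invoking the delicate monotonicity of rotation numbers). A secondary point to treat with care is the degenerate case $a=b$, where $E$ collapses to the circle $|w-a|=\tfrac12(1-|a|^2)$ centred at the common focus, which should be read as the corresponding limiting circular caustic.
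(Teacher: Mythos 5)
Your proposal is correct and follows essentially the same route as the paper's own proof: existence via the Blaschke $3$-ellipse of Theorem 2.9 in \cite{DGSV}, and uniqueness among \emph{all} conics with foci $a,b$ via the linearity in $t$ of the Cayley-type condition of Corollary \ref{cor:k3}. The extra details you supply --- the identity $|1-\bar a b|^2-|a-b|^2=(1-|a|^2)(1-|b|^2)$ ensuring a genuine ellipse, the rigid-motion identification of the Blaschke and confocal-pencil coordinate frames, and the degenerate case $a=b$ --- are points the paper leaves implicit, and they strengthen rather than alter the argument.
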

	
	\begin{proof}
		We can construct the Blaschke $3$-ellipse with the focal points $p$, $q$.
		This ellipse satisfies all the requirements of the Theorem. It is unique such a conic, because the Cayley-type condition derived above in Corollary \ref{cor:k3} shows that there is at most one conic with the foci $p$, $q$ which is a $3$-Poncelet caustic with respect to $\mathbb T$.
	\end{proof}
	
}

{
	\begin{remark} Previously existing proofs of the uniqueness, like one from \cite{DGSV} (pp. 43– 44), assume that the conics are completely situated within $\mathbb D$. They are based on a nice and intuitive argument, which however, does not seem to be applicable nor easily extendable in a more general situation of our interest, when conics are not necessarily completely situated within $\mathbb D$.
		Moreover, in principle, a triangle which is inscribed in $\mathbb T$ and whose sides belong to tangents to an ellipse which is not completely contained in $\mathbb D$ could exist, see Figure \ref{fig:triangle}. Our nontrivial conclusion is that in such a situation, at least one focal point lies outside $\mathbb D$.
	\end{remark}
}
\begin{figure}[h]
	\begin{center}
		\begin{tikzpicture}[scale=4]
			\draw[thick](1,1) circle (1);
			\draw[black,fill=gray](0.4472,0) circle (0.02);
			\draw[black,fill=gray](-0.4472,0) circle (0.02);
			\draw[thick,gray](0,0) ellipse (0.6 and 0.4);
			
			\draw (1., 2.)--(0.619984, 0.0750201)--(0.0141624, 0.832297)--(1., 2.);
			\draw[black,fill=black](1,2) circle (0.02);
			\draw[black,fill=black](0.619984, 0.0750201) circle (0.02);
			\draw[black,fill=black](0.0141624, 0.832297) circle (0.02);
			
			\draw[gray,fill=gray!50](0.59487, -0.0521934) circle (0.02);
			\draw[gray,fill=gray!50](-0.52287, 0.196193) circle (0.02);
			\draw[gray,fill=gray!50](0.529412, 0.188235) circle (0.02);

		\end{tikzpicture}
		\caption{A triangle inscribed in the circle}\label{fig:triangle}
	\end{center}
\end{figure}

\begin{remark} Not only that our proof is more { elaborated and with a wider scope} than previously existing ones, but in our statement { we sharpened the focus} as well. We prove that $\C$ is unique not only among the ellipses with focal points $p$, $q$ but it is also unique among all conics with these focal points. In other words, we prove that there is no hyperbola with given properties.
\end{remark}
\begin{theorem}
	Given a triangle $ABS$ inscribed in $\mathbb T$ with the property that lines $AB$, $AC$, $BC$ touch an ellipse in the points $C_1$, $B_1$, $A_1$ respectively with the following orders of the points $B-A_1-C$, $C_1-B-A$, and $B_1-C-A$. Then at least one focal point of $\C$ is outside $\mathbb D$.
\end{theorem}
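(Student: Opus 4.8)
The plan is to argue by contradiction. I would assume that both focal points $a,b$ of $E$ lie in the open disk $\mathbb D$ and derive a contradiction with the prescribed orderings of the tangency points. The strategy has two ingredients: first, identify $E$ with the Blaschke ellipse by means of the uniqueness already established; second, exploit the fact that this Blaschke ellipse sits strictly inside $\mathbb D$, which is incompatible with a tangency point lying beyond a vertex on its chord.

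First I would record that the hypothesis furnishes a single triangle $ABC$ inscribed in $\mathbb T$ whose three sides are tangent to $E$. By Poncelet's closure theorem this already means that $E$ is a $3$-Poncelet caustic with respect to $\mathbb T$; equivalently, the Cayley-type condition $C_2=0$ of Corollary \ref{cor:k3} holds for the parameter of $E$ inside the confocal pencil determined by the foci $a,b$. Since that condition is linear in $t$, it has the single root $t_3$, so $E$ is the unique conic with foci $a,b$ that is a $3$-Poncelet caustic. Under the standing assumption $a,b\in\mathbb D$, Theorem \ref{th:Bla3} identifies this unique caustic as the Blaschke ellipse $|w-a|+|w-b|=|1-\bar a b|$. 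Hence $E$ must coincide with that Blaschke ellipse.

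The decisive step is then to show that the Blaschke ellipse lies entirely in the open disk, $E\subset\mathbb D$. This is exactly the numerical-range containment underlying \cite{DGSV}; alternatively it can be verified directly, since with foci $a,b\in\mathbb D$ and major semi-axis $\tfrac12|1-\bar a b|$ one checks that the four axis-endpoints of the ellipse have modulus strictly below $1$, which pins the whole convex curve inside $\mathbb D$. Granting $E\subset\mathbb D$, the contradiction is immediate: the side $AB$ is a chord of $\mathbb T$, so the intersection of the line $AB$ with $\mathbb D$ is precisely the open segment $AB$. The tangency point $C_1$ lies on line $AB$ and, being a point of $E\subset\mathbb D$, lies in $\mathbb D$; therefore $C_1$ lies strictly between $A$ and $B$, in the order $A-C_1-B$. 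This contradicts the assumed order $C_1-B-A$, in which $C_1$ lies beyond $B$ and hence outside $\mathbb D$. (The tangency point $B_1$ with order $B_1-C-A$ yields the identical contradiction.) It follows that the assumption $a,b\in\mathbb D$ is untenable, so at least one focus of $E$ lies outside $\mathbb D$.

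I expect the only genuine obstacle to be the containment $E\subset\mathbb D$: once it is in place, the remainder is a one-line incidence argument about a chord and its endpoints. I would therefore either cite the numerical-range bound from \cite{DGSV} or, to keep the argument self-contained, include the short computation that the endpoints of both axes of the Blaschke ellipse have modulus less than $1$ whenever $|a|,|b|<1$.
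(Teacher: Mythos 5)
Your proposal takes essentially the route the paper intends: the paper prints no separate proof of this theorem, presenting it as the consequence of Theorem \ref{th:Bla3} announced in the remark immediately preceding it, and your argument --- Cayley/Poncelet closure makes $E$ the unique $3$-Poncelet caustic with its foci (Corollary \ref{cor:k3}), so if both foci lay in $\mathbb D$ then Theorem \ref{th:Bla3} would force $E$ to be the Blaschke ellipse, which lies inside $\mathbb D$ and hence touches each chord of $\mathbb T$ strictly between its endpoints, contradicting the order $C_1-B-A$ --- is exactly that argument made explicit. Two points are handled correctly and are worth keeping: Cayley's criterion concerns tangency of the side \emph{lines}, so the nonstandard ordering of the tangency points does not obstruct the closure step; and the incidence step is airtight, since a line through $A,B\in\mathbb T$ meets $\mathbb D$ precisely in the open segment $AB$.

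The one step you must repair is your proposed ``self-contained'' verification of $E\subset\mathbb D$. The principle you invoke --- that an ellipse whose four axis endpoints lie in a disk must lie in that disk --- is false in general. Take the ellipse with semi-axes $\alpha=0.5$, $\beta=0.4$ centred at the origin and the point $P=(0,t)$ with $t=0.2$ on its minor axis: the squared distance from $P$ to the ellipse point with $\sin\theta=s$ is $\alpha^2+t^2-(\alpha^2-\beta^2)s^2-2\beta t s$, maximised at $s^*=-\beta t/(\alpha^2-\beta^2)=-8/9\in[-1,1]$ with value $0.29+0.0064/0.09\approx0.3611$, while the four vertices are at squared distance at most $0.36$; a disk centred at $P$ of radius $\sqrt{0.3605}$ contains all four vertices but not the ellipse. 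So justify the containment differently: either cite \cite{DGSV} (the Blaschke ellipse is inscribed in triangles inscribed in $\mathbb T$, equivalently it bounds a numerical range lying in the closed disk), as you offer in passing, or prove it directly. For $|w|=1$ one has $|w-a|=|1-\bar a w|$, hence
$$
|w-a|+|w-b|=|1-\bar a w|+|w-b|\ \ge\ |1-\bar a w|+|\bar a w-\bar a b|\ \ge\ |1-\bar a b|,
$$
and the first inequality is strict because $|a|<1$ and $w\ne b$; thus the ellipse is disjoint from $\mathbb T$, so being connected it lies either in $\mathbb D$ or in the exterior of $\overline{\mathbb D}$. The latter is impossible: $\mathbb D$ is connected, misses the ellipse, and contains the focus $a$, so it would lie in the ellipse's interior, forcing that interior (whose diameter is the major axis $|1-\bar a b|\le 1+|a||b|<2$) to contain a set of diameter $2$. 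With this replacement your proof is complete.
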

\begin{remark} Let us observe Figure  \ref{fig:triangle} and see what happens with rotational numbers in such a situation. Let us denote by $\rho_A$, $\rho_B$, $\rho_C$ the arcs of the ellipse $\C$ seen from the corresponding vertex of the triangle $ABC$. Then it is obviously
	$$
	\rho_A=\rho_B+\rho_C.
	$$
	
\end{remark}

\subsection{Poncelet quadrangles and Blaschke ellipses}

\begin{remark}
	We want to observe first that  not all $4$-Poncelet ellipses are Blaschke ellipses.
\end{remark}
However, the following reformulation and generalization of a statement from \cite{DGSV}, p. 175, holds:

	\begin{theorem}\label{th:Bla4}
		Given $p, q\in \mathbb D$. Among all conics with the foci $p$, $q$ there is a unique one which is $4$-Poncelet inscribed in $\mathbb T$. It is an ellipse and it is the Blaschke ellipse.
		
	\end{theorem}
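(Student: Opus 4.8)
The plan is to mirror the proof of Theorem~\ref{th:Bla3} one degree higher: obtain uniqueness from the Cayley-type condition, and obtain existence (together with the ellipse and Blaschke assertions) from the Blaschke construction. By an isometry I first place $a,b$ as the foci of the standard confocal pencil $\mathcal{C}(t)$ and $\mathbb T$ as $\Gamma$; isometries preserve being a $k$-Poncelet caustic, preserve the foci, and preserve the ellipse/hyperbola dichotomy, so nothing is lost. The conics with foci $a,b$ are then exactly the members of $\mathcal{C}(t)$, $t\in\mathbb R$ (ellipses for $t<b$, hyperbolas for $b<t<a$). A member $\mathcal{C}(t)$ is a $4$-Poncelet caustic with respect to $\Gamma$ iff the Cayley condition $C_3(t)=0$ holds, which by Corollary~\ref{cor:k4} is the single linear equation $\alpha_4+\beta_4 t=0$. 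Since $a,b\in\mathbb D$, the foci are strictly inside $\Gamma$, hence neither on $\Gamma$ nor symmetric with respect to $\Gamma$; by Theorem~\ref{th:isorotational4} the pencil is not $4$-iso-periodic, i.e.\ we are not in the degenerate case $\alpha_4=\beta_4=0$. Therefore the equation has at most one root $t_4$, so at most one conic with foci $a,b$ --- ellipse or hyperbola --- can be a $4$-Poncelet caustic.

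For existence I would exhibit an actual real caustic by the Blaschke construction. Taking a decomposable degree-$4$ Blaschke product $B=g\circ h$ with degree-$2$ factors, the cited Proposition (Corollary~13.9 of \cite{DGSV}) guarantees that its associated Blaschke curve is an \emph{ellipse} $E$; by the very definition of the Blaschke curve, the solutions of $B(z)=\lambda$, $\lambda\in\mathbb T$, are the vertices of quadrilaterals inscribed in $\mathbb T$ and circumscribed about $E$, so $E$ is a genuine (real) $4$-Poncelet caustic with respect to $\mathbb T$. The remaining freedom in the two quadratic factors is then used to make the foci of $E$ equal to the prescribed $a,b$, reproducing the Blaschke $4$-ellipse of \cite{DGSV}, p.~175. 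This single step simultaneously delivers existence, the assertion that the extremal conic is an ellipse, and its identification as the Blaschke ellipse.

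Combining the two parts closes the argument. The Blaschke ellipse $E$ is a $4$-Poncelet caustic with foci $a,b$, so $C_3(t)=0$ does have a solution, and its parameter must coincide with the unique root $t_4$ from the first step; hence $E$ is the \emph{only} conic with foci $a,b$ that is a $4$-Poncelet caustic. Because $t_4$ is the parameter of an ellipse it satisfies $t_4<b$, so no hyperbola of the pencil ($b<t<a$) qualifies --- this no-hyperbola conclusion is exactly the content that goes beyond \cite{DGSV}, the $k=4$ analogue of the strengthening in Theorem~\ref{th:Bla3}. I expect the existence step to be the main obstacle: one must track how the caustic data of the two quadratic factors of $B$ combine into the two foci of the resulting ellipse and verify that these foci can be made to range over every prescribed pair in $\mathbb D\times\mathbb D$. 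The uniqueness step, by contrast, is immediate once one invokes the linearity in Corollary~\ref{cor:k4} together with the non-iso-periodicity supplied by Theorem~\ref{th:isorotational4}.
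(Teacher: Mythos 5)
Your uniqueness half is correct and follows the paper's own route: after normalizing by an isometry, the conics with foci $a,b$ form the pencil $\mathcal{C}(t)$, and Corollary~\ref{cor:k4} turns the $4$-Poncelet condition into the linear equation $\alpha_4+\beta_4t=0$. Your extra observation that $a,b\in\mathbb D$ excludes the degenerate case $\alpha_4=\beta_4=0$ via Theorem~\ref{th:isorotational4} (two points strictly inside $\mathbb T$ can neither lie on $\mathbb T$ nor be inverse to each other with respect to $\mathbb T$) is a point the paper leaves implicit, and it tightens the argument.

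The genuine gap is in the existence half, and you flagged it yourself without closing it. Starting from an arbitrary decomposable degree-$4$ product $B=g\circ h$, Corollary~13.9 of \cite{DGSV} produces \emph{some} ellipse as a $4$-Poncelet caustic, but you never show that its foci can be made to equal the prescribed pair $a,b$: the sentence ``the remaining freedom in the two quadratic factors is then used to make the foci of $E$ equal to the prescribed $a,b$'' is exactly the statement requiring proof, not a plan one may defer. The paper closes this with Lemma~\ref{lemma:Blaschke2}: for $a,b\in\mathbb D$ there is a unique $c\in\mathbb D$ with $B_c(a)=b$; one then forms the degree-$4$ Blaschke product with zeros $0,c,a,b$. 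Since $B_c(0)=c$ and $B_c(a)=b$, this product is decomposable, and by the construction of \cite{DGSV} (p.~175) its associated Blaschke curve is an ellipse with foci exactly $a$ and $b$, which is therefore a $4$-Poncelet caustic. Without this step (or an equivalent surjectivity argument for the map from decomposable products to pairs of foci), existence is unproven --- and with it the assertions that the unique caustic is an ellipse and that no hyperbola with foci in $\mathbb D$ can be a $4$-Poncelet caustic (Corollary~\ref{cor:no4hyp}) also remain open, since both are deduced by combining existence of the Blaschke ellipse with the at-most-one count from Corollary~\ref{cor:k4}.
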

	\begin{proof} Using Lemma \ref{lemma:Blaschke2}, we see that for given $p, q\in \mathbb D$, there is a unique $c$ (this $c\in \mathbb D$), such that $B_c(p)=q$. Using now the construction from   \cite{DGSV}
		one gets the Blaschke product with zeros $0$, $c$, $p$, $q$. Since $B_c(p)=q$, $B_c(0)=c$, this product is decomposable and thus, generates a Blaschke ellipse which is $4$-Poncelet inscribed in $\mathbb{T}$.
		Using the Cayley-type condition derived above in Corollary \ref{cor:k4} we see that there is at most one such $4$-Poncelet inscribed conic with the foci $p$, $q$.
		Thus, we obtained the statement.
	\end{proof}

The last Theorem has a highly counter-intuitive consequence.
\begin{corollary}\label{cor:no4hyp} A hyperbola with the foci in $\mathbb D$ cannot be a $4$-Poncelet inscribed in $\mathbb T$.
\end{corollary}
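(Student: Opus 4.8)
The plan is to read the corollary off the uniqueness half of Theorem \ref{th:Bla4}, so that the only genuine work is to explain why admitting nonconvex polygons does not weaken the conclusion. Fix $a, b \in \mathbb D$ and consider the confocal family of conics sharing these points as foci; this family contains both ellipses and hyperbolas, and a hyperbola with foci $a, b$ is one of its members. Suppose, for contradiction, that such a hyperbola were a $4$-Poncelet caustic with respect to $\mathbb T$. Then it would be a conic with foci $a, b$ that is a $4$-Poncelet caustic, and Theorem \ref{th:Bla4} forbids this, since the unique conic with foci $a, b$ that is a $4$-Poncelet caustic is the Blaschke ellipse, which is an ellipse rather than a hyperbola.

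The crux is to justify that the word ``unique'' in Theorem \ref{th:Bla4} already absorbs the nonconvex polygons flagged in the statement. This rests on Corollary \ref{cor:k4}: the existence of a $4$-Poncelet polygon inscribed in $\Gamma$ and circumscribed about a member of the confocal family is encoded by the single linear equation $\alpha_4 + \beta_4 t = 0$, which is precisely the divisor relation $4 P_0 \sim 4 P_\infty$ on the cubic $\mathcal{E}_t$. This relation asks only whether $P_0 - P_\infty$ is a point of order dividing four on the elliptic curve, so it records only whether the Poncelet process closes after four steps; it is indifferent to the winding number and to whether the resulting quadrangle is convex or self-intersecting. Hence the same linear equation governs convex and nonconvex quadrangles at once, and its solution locus is the full set of parameters admitting any $4$-Poncelet polygon.

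It remains to note that for $a, b \in \mathbb D$ this locus is a single point lying in the ellipse range. The degenerate alternative $\alpha_4 = \beta_4 = 0$ is excluded because, by Theorem \ref{th:isorotational4}, it would force the foci onto $\mathbb T$ or into positions symmetric with respect to $\mathbb T$, contradicting $a, b \in \mathbb D$; and the empty alternative is excluded by the existence of the Blaschke ellipse furnished by Theorem \ref{th:Bla4}. Thus $\alpha_4 + \beta_4 t = 0$ has exactly one root, Theorem \ref{th:Bla4} identifies the corresponding conic as an ellipse, and therefore no hyperbolic member of the family can satisfy the Cayley condition. The conceptual obstacle is exactly the temptation warned against in the preceding remarks: a geometric monotonicity-of-rotation-number argument would suggest that sweeping the parameter through the hyperbolic range must eventually produce some, possibly nonconvex, closing, but that intuition fails once the caustic leaves $\mathbb D$; it is the algebraic rigidity of the linear Cayley condition, not any monotonicity, that rescues the statement.
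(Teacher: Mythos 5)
Your proposal is correct and takes essentially the same route as the paper: the paper gives no separate proof of this corollary, presenting it as an immediate consequence of Theorem \ref{th:Bla4}, whose uniqueness half rests on the linear Cayley condition of Corollary \ref{cor:k4} --- precisely the mechanism you invoke. Your additional care --- noting that the divisor relation $4P_0\sim 4P_\infty$ is blind to convexity and winding, so nonconvex polygonal lines are automatically covered, and that the degenerate case $\alpha_4=\beta_4=0$ is impossible for foci in $\mathbb D$ by Theorem \ref{th:isorotational4} --- simply makes explicit details the paper leaves implicit.
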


\section{View towards Painlev\'e VI equations}\label{sec:painleve}

\subsection{Picard solutions and Okamoto transformation}
\label{sect_Picard}

The Painlev\'e VI equations form one of the six families of remarkable second order ordinary differential equations introduced by Paul Painlev\'e and his school at the beginning of the XX century.
The Painlev\'e VI equations are the four-parameter family
\begin{equation}
	\label{Painleve}
	\frac{d^2 y}{dx^2} = \frac{1}{2} \left(  \frac{1}{y} + \frac{1}{y-1}+ \frac{1}{y-x}  \right) \left( \frac{dy}{dx} \right)^2 - \left( \frac{1}{x} + \frac{1}{x-1} + \frac{1}{y-x} \right) \frac{dy}{dx}
\end{equation}
\begin{equation*}
	+ \frac{y(y-1)(y-x)}{x^2(x-1)^2}\left( \alpha +\beta\frac{x}{y^2} + \gamma\frac{x-1}{(y-1)^2} + \delta\frac{x(x-1)}{(y-x)^2}  \right)
\end{equation*}
with parameters $\alpha, \beta, \gamma, \delta \in \mathbb C$, see for example \cites{Okamoto1987, IKSYGaussPainleve, Hitchin,  DS2019} and references therein. We are here interested in two particular equations with parameter values
\begin{equation}
	\label{constants}
	\alpha=\frac{1}{8}, \qquad \beta=-\frac{1}{8}, \qquad \gamma=\frac{1}{8}, \qquad \delta=\frac{3}{8},
\end{equation}
and
\begin{equation}
	\label{constants1}
	\alpha=0, \qquad \beta=0, \qquad \gamma=0, \qquad \delta=\frac{1}{2}.
\end{equation}

We will use  the transformed Weierstrass $\wp$-function with periods $2w_1$ and $2w_2$ satisfying the equation
\begin{equation}
	\label{wp}
	\left( {\hat \wp^\prime}(z) \right)^2 = \hat \wp(z)(\hat \wp(z)-1)(\hat \wp(z)-x).
\end{equation}
Apparently, more than a decade prior to Painlev\'e, Picard found the explicit solution of the above mentioned Painlev\'e VI equation with parameters \eqref{constants1}, see \cite{Picard}. Namely,  the function
\begin{equation}
	\label{Picard}
	y_0 (x) = \hat \wp(2c_1 w_1(x) + 2c_2 w_2(x))
\end{equation}
is the Picard solution \cite{Picard} of the Painlev\'e VI equation (\ref{Painleve}) with  constants $\alpha = \beta=\gamma=0,$ $\delta=1/2$, see for example (3.6) in \cite{Okamoto1987}.

The   transformation
\begin{equation}
	\label{Okamoto}
	y(x) = y_0 + \frac{y_0(y_0-1)(y_0-x)}{x(x-1)y_0^\prime - y_0(y_0-1)},
\end{equation}
follows from \cite{Okamoto1987} and was written in this form in \cite{DS2019}. It relates the Picard solution
$y_0(x)$ (\ref{Picard}) and the general solution $y(x)$ of the
Painlev\'e VI equation with constants (\ref{constants}).

In order to apply the Okamoto transformation, one needs an expression for $y_0'.$ Such an expression is obtained in \cite{DS2019} as follows.
Consider the elliptic curve $\E$ parameterized by $(u, v)=(\hat \wp, \hat \wp')$, that is the curve of the equation
\begin{equation}
	\label{elliptic}
	v^2=u(u-1)(u-x)
\end{equation}

and its Abel map
\begin{equation*}
	\mathcal A(p )=\int_{p_\infty}^p\frac{du}{v},  \qquad p\in\mathcal L,
\end{equation*}
into the Jacobian $J(\E) = {\mathbb C}/\{n(2w_1)+m (2w_2)\mid n, m\in{\mathbb Z}\} $,
where $p_\infty$ is the point at infinity on the compactified curve $\E$. Let us now take an arbitrary point $2w_1c_1+2w_2c_2$ in the Jacobian, with some constants $c_1$ and $c_2$ and consider its Jacobi inversion, that is the point $q_0$ on the curve such that

\begin{equation*}
	\mathcal A(q_0 )=\int_{p_\infty}^{q_0}\frac{du}{v} = 2w_1c_1+2w_2c_2.
\end{equation*}
From these data, denoting by $q^*$ the elliptic involution of a point $q$ of the curve, we build the following differential of the third kind:
\begin{equation}\label{eq:omega1ds}
	\Omega_1:=\Omega_{q_0,q_0^*} - 4 \pi{\rm i}c_2\omega\,,
\end{equation}
with $\Omega_{q_0,q_0^*}$ being the normalized differential of the third kind with poles at $q_0$ and $q_0^*$ of residues $1$ and $-1$, respectively, and $\omega$ the normalized holomorphic differential $w=\frac{1}{2w_1}\frac{du}{v}\,.$  Differential $\Omega_1$ has two zeros paired by the elliptic involution, we denote by $y$ their projection on the $u$-sphere. This projection, as a function of $x$, satisfies a Painlev\'e-VI equation, as stated in the following theorem.
\begin{theorem}[\cite{DS2019}, Theorem 1]\label{th:ds2019t1} If $x$ varies, while $c_1, c_2$ stay fixed, then
	$y(x),$ the position of zero of $\Omega_1$, is the Okamoto transformation \eqref{Okamoto} of $y_0$ and satisfies Painlev\'e VI with constants \eqref{constants}
\end{theorem}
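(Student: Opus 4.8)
The plan is to reduce the whole statement to the single algebraic identification of $y(x)$ with the right-hand side of the Okamoto transformation \eqref{Okamoto}. Once the common $u$-projection of the two zeros of $\Omega_1$ is shown to coincide with the Okamoto transform of $y_0$, the assertion that $y(x)$ solves Painlev\'e VI with constants \eqref{constants} is immediate, since \eqref{Okamoto} was already stated to carry the Picard solution $y_0$ (a solution of PVI with constants \eqref{constants1}) to the general solution with constants \eqref{constants}. So all the real work lies in locating the zero of $\Omega_1$ explicitly.

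First I would make $\Omega_1$ explicit. Writing $q_0=(y_0,v_0)$ with $y_0=\hat\wp(z_0)$, $v_0=\hat\wp'(z_0)$, $z_0=2c_1w_1+2c_2w_2$, and using $du/v=dz$ on $\surf$, the normalized differential of the third kind is $\Omega_{q_0,q_0^*}=\big(\tfrac{v_0}{u-y_0}+A\big)\tfrac{du}{v}$, where $A$ is fixed by the vanishing $a$-period. The classical identity $\frac{\hat\wp'(z_0)}{\hat\wp(z)-\hat\wp(z_0)}=\zeta(z-z_0)-\zeta(z+z_0)+2\zeta(z_0)$ together with $\zeta(z+2w_1)=\zeta(z)+2\eta_1$ gives $A=-2\zeta(z_0)+2\eta_1z_0/w_1$. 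Hence $\Omega_1=\big(\tfrac{v_0}{u-y_0}+C\big)\tfrac{du}{v}$ with $C=A-2\pi\mathrm{i}c_2/w_1$, and since $du/v$ is nowhere zero the two zeros of $\Omega_1$ (swapped by the involution) share the $u$-value
$$
y=y_0-\frac{v_0}{C}.
$$
Expanding $2\eta_1z_0/w_1$ and invoking Legendre's relation $\eta_1w_2-\eta_2w_1=\pi\mathrm{i}/2$, the imaginary $c_2$-terms cancel against the $-4\pi\mathrm{i}c_2\omega$ correction built into $\Omega_1$, leaving the clean expression $C=-2\zeta(z_0)+4c_1\eta_1+4c_2\eta_2$.

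Next I would match $y$ to \eqref{Okamoto}. Since $y_0(y_0-1)(y_0-x)=v_0^2$ by \eqref{wp}, the Okamoto formula reads $y=y_0+v_0^2/\big(x(x-1)y_0'-y_0(y_0-1)\big)$, so $y=y_0-v_0/C$ agrees with it precisely when
$$
y_0(y_0-1)-x(x-1)y_0'=v_0\big(-2\zeta(z_0)+4c_1\eta_1+4c_2\eta_2\big).
$$
To obtain $y_0'$ I would differentiate $y_0=\hat\wp(z_0;\Lambda(x))$, giving $y_0'=v_0(2c_1w_1'+2c_2w_2')+\partial_x\hat\wp\big|_{z_0}$; equivalently, differentiating the defining relation $\mathcal A(q_0)=2w_1c_1+2w_2c_2$ in $x$ yields $y_0'/v_0+\tfrac12\int_{p_\infty}^{q_0}\frac{du}{v(u-x)}=2c_1w_1'+2c_2w_2'$, which reduces $\partial_x\hat\wp$ to a period of a second-kind differential.

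The hard part will be exactly this last identity. It forces one to compute the variation of $\hat\wp$ and of the half-periods $w_1,w_2$ with respect to the modulus $x$ -- that is, the Picard--Fuchs/Gauss--Manin data of the family $v^2=u(u-1)(u-x)$ -- in terms of complete elliptic integrals of the second kind and the quasi-periods $\eta_i$, and then to collapse the resulting combination, via the differential equation \eqref{wp} and Legendre's relation, onto the compact $\zeta$-expression for $C$ found above. Once this reconciliation is carried out, $y(x)$ is by construction the Okamoto transform of $y_0$, and the Painlev\'e VI property with constants \eqref{constants} follows at once from the already-stated behaviour of the transformation \eqref{Okamoto}.
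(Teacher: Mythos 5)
Your setup and the explicit part of your computation check out: with $q_0=(y_0,v_0)$, $z_0=2c_1w_1+2c_2w_2$, the normalized third-kind differential is indeed $\Omega_{q_0,q_0^*}=\left(\frac{v_0}{u-y_0}+A\right)\frac{du}{v}$ with $A=-2\zeta(z_0)+2\eta_1z_0/w_1$ (the $a$-period of $\frac{v_0}{u-y_0}\frac{du}{v}$ is $4w_1\zeta(z_0)-4\eta_1z_0$, so the normalization gives exactly this $A$), Legendre's relation does cancel the $-4\pi\mathrm{i}c_2\omega$ correction, and since $du/v$ is nonvanishing the two zeros of $\Omega_1$ share the $u$-value $y=y_0-v_0/C$ with $C=-2\zeta(z_0)+4c_1\eta_1+4c_2\eta_2$. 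The reduction of the whole theorem to the single identity $y_0(y_0-1)-x(x-1)y_0'=v_0C$ is also the correct target, and, granting the stated property of \eqref{Okamoto} (that it carries Picard solutions of Painlev\'e VI with constants \eqref{constants1} to solutions with constants \eqref{constants}), the Painlev\'e VI claim would indeed follow from that identity.

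The problem is that your argument stops exactly where the content of the theorem lies: the identity $y_0(y_0-1)-x(x-1)y_0'=v_0\left(-2\zeta(z_0)+4c_1\eta_1+4c_2\eta_2\right)$ is never proved. You write down the differentiated Abel-map relation $\frac{y_0'}{v_0}+\frac12\int_{p_\infty}^{q_0}\frac{du}{v(u-x)}=2c_1w_1'+2c_2w_2'$ and then defer the rest ("the hard part will be exactly this last identity\dots once this reconciliation is carried out"). But that reconciliation \emph{is} the theorem: it requires the Picard--Fuchs/Rauch-type variational formulas for $w_1'$, $w_2'$, an evaluation of the third-kind integral with pole at the branch point $p_x$ in terms of $\zeta$- and $\sigma$-quantities, and a nontrivial collapse of all of these onto the expression for $C$; none of this is routine, and none of it is done. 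Note also that the paper itself contains no proof of this statement --- it is imported verbatim from \cite{DS2019} --- and the computation you are missing is precisely what that reference supplies, packaged in the paper as formula \eqref{derivative}, $\frac{dy_0}{dx}=-\frac14\,\Omega_1(p_x)\frac{\omega(p_x)}{\omega(q_0)}$: evaluating $\Omega_1$ at $p_x$ is the ingredient that turns your differentiated Abel-map relation into the needed identity. As it stands, your text is a correct preliminary computation plus a plan, not a proof.
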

Moreover, the derivative of $y_0'$ needed to apply the Okamoto transformation \eqref{Okamoto} can also be expressed in terms of $\Omega_1:$
\begin{equation}
	\label{derivative}
	\frac{dy_0}{dx} = - \frac{1}{4} \Omega_1(p_x) \frac{\omega(p_x)}{\omega{(q_0)}}.
\end{equation}
Note that the $a$-period of $\Omega_1$ is $-4\pi i c_2$, while the $b$-period is $4\pi i c_1$. Thus, the periods are preserved under the deformation described in Theorem \ref{th:ds2019t1}.

\subsection{From isoperiodic families to explicit solutions to a Painlev\'e VI equation}

	\subsubsection{Quadrangles}

In order to simplify calculations, we will consider the ``super-symmetric"  case, when both $x_0$ and $y_0$ are zero, meaning that the circle $\Gamma=\Gamma_0$ is concentric with the confocal conics $\mathcal{C}(t)$. Then, $a=b+1$ and
$$
\Gamma_0:\ x^2+y^2=1,
$$
and
$$
\mathcal{C}(t):\ \frac{x^2}{a-t}+\frac{y^2}{a-t-1}=1.
$$
The pencil of cubics $\E_t$ given by \eqref{eq:elliptic-curves} under the assumptions we just made becomes:
$$
\begin{aligned}
	\E_t: \mu^2=
	-(\lambda +1) (\lambda(a-t)+1) (\lambda(a-t-1)+1).
\end{aligned}
$$
This pencil has a section of points $P_0(t)$ of order $4$ corresponding to $\lambda=0$ each.
Each of the cubics $\E_t$ has three finite ramification points:
$$
\lambda_1(t)=\frac{1}{t-a},\quad \lambda_2(t)=\frac{1}{t-a+1},\quad \lambda_3(t)=-1.
$$
Let us consider the following M\"obius transformations of the basis:
$$
u=\phi_t(\lambda)=\frac{\lambda-\lambda_1}{\lambda_2-\lambda_1}.
$$
Then $\phi_t(\infty)=\infty$, $\phi_t(\lambda_1)=0$, $\phi_t(\lambda_2)=1$ and denote $x:=\phi_t(\lambda_3)$.
The M\"obius transformations $\phi_t$ induce morphisms from the elliptic curves $\E_t$ onto the elliptic curve \eqref{elliptic}.
Since $P_0(t)$ corresponding to $\lambda=0$ are points of a fixed finite order (that order equals $4$), we get the following:
\begin{proposition} Let
	\begin{equation}\label{eq:y_0}
		y_0(t):=\phi_t(0).
	\end{equation}
	Then $y_0(t)=-(a-t-1)$ is a Picard solution to the Painlev\'e VI $(0,0,0, 1/2)$ as a function of $x=(a-t-1)^2$.
\end{proposition}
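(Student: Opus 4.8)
The plan is to reduce the proposition to one short coordinate computation and one structural fact about torsion in a family. First I would make the Möbius change of variable completely explicit. Writing $s=t-a$, so that $\lambda_1=1/s$, $\lambda_2=1/(s+1)$ and $\lambda_3=-1$, the affine map $u=\phi_t(\lambda)=(\lambda-\lambda_1)/(\lambda_2-\lambda_1)$ sends $\lambda_1\mapsto 0$, $\lambda_2\mapsto 1$ and $\lambda_3\mapsto x$. A direct evaluation gives $x=\phi_t(-1)=(s+1)^2=(a-t-1)^2$ and $y_0(t)=\phi_t(0)=s+1=-(a-t-1)$, so that $y_0^2=x$. This already yields both formulas in the statement and is entirely routine; it also shows that $\phi_t$ carries $E_t$ isomorphically onto the curve \eqref{elliptic}, $v^2=u(u-1)(u-x)$, with branch points $0,1,x$.

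The substance is to identify $y_0$ as a Picard solution. Since $\phi_t$ is affine it fixes the point over $\lambda=\infty$, which is the identity of $E_t$, so $\phi_t$ is an isomorphism of elliptic curves and hence preserves the order of torsion. The marked point $P_0(t)$ above $\lambda=0$ therefore maps to a point $Q_0$ of order four on \eqref{elliptic} whose $u$-coordinate is $y_0=\phi_t(0)$. I would pin down which four-torsion point this is by the classical halving relation: the points $Q$ with $2Q=(0,0)$ have $u$-coordinate $0\pm\sqrt{(0-1)(0-x)}=\pm\sqrt{x}$, and $y_0=-(a-t-1)=-\sqrt{x}$ is exactly one of them. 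Thus $Q_0$ is the four-torsion point halving the two-torsion point $(0,0)$.

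The final and genuinely conceptual step is to pass from ``four-torsion for every $t$'' to ``Picard solution.'' In the Abel--Jacobi coordinate $z$ on \eqref{elliptic} a four-torsion point sits at $z=2c_1w_1+2c_2w_2$ with $c_1,c_2$ a fixed pair of quarter-periods; for the point halving $(0,0)$ this is a quarter of the corresponding half-period, and these constants do not depend on $x$. Consequently $y_0=\hat\wp(2c_1w_1+2c_2w_2)$ is precisely the Picard form \eqref{Picard}, so $y_0$ is a Picard solution of Painlevé VI with the parameters \eqref{constants1}. As an independent check one may substitute $y=-\sqrt{x}$, i.e. $x=y^2$, directly into \eqref{Painleve} with $(\alpha,\beta,\gamma,\delta)=(0,0,0,1/2)$: the sum $\tfrac1y+\tfrac1{y-1}+\tfrac1{y-x}$ collapses to $2/y$, and after simplification both sides reduce to $-1/(4y^3)$.

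The main obstacle is the last step: justifying that the Abel--Jacobi image of the four-torsion point is genuinely constant in $x$, i.e. that the torsion section is horizontal in the family (equivalently, that $c_1,c_2$ are independent of the modulus). This is exactly what upgrades ``$y_0$ is a solution'' to ``$y_0$ is a Picard solution,'' and it rests on the flatness of torsion sections under the Gauss--Manin connection rather than on any local computation. A secondary point requiring care is the consistent normalisation of the half-periods $w_1,w_2$ together with the choice of branch of $\sqrt{x}$, so that the sign in $y_0=-\sqrt{x}$ matches the four-torsion point halving $(0,0)$ throughout the family.
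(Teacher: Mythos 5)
Your proposal is correct and takes essentially the same approach as the paper: the same direct evaluation $x=\phi_t(\lambda_3)=(a-t-1)^2$, $y_0=\phi_t(0)=-(a-t-1)$, followed by the argument that since the marked points $P_0(t)$ have fixed order four, the constants $c_1,c_2$ cannot depend on $t$, hence not on $x$, so $y_0$ has exactly the Picard form \eqref{Picard}. The only difference is presentational: where you invoke flatness of torsion sections under Gauss--Manin, the paper's (implicit) justification is the elementary one that order-four torsion confines $(c_1,c_2)$ to a finite set so continuity in $t$ forces constancy, and your identification of the specific four-torsion point halving $(0,0)$ together with the direct substitution check are additional confirmations rather than a different route.
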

\begin{proof} By a direct computation we get
	\begin{equation}\label{eq:ty_0}
		x=\phi_t(\lambda_3)=(a-t-1)^2, \quad y_0(t)=\phi_t(0)=-(a-t-1).
	\end{equation}
	
	Since the points $P_0(t)$ which correspond to $\lambda=0$ have all order $4$, we see that for them $c_1$, $c_2$ do not depend on $t$, thus also do not depend on $x$, so, by \eqref{Picard}, they provide the Picard solution to the Painlev\'e VI $(0,0,0, 1/2)$.
\end{proof}

Before we apply the Okamoto transformation \eqref{Okamoto}, let us verify one technical statement.
\begin{lemma}\label{lemma:identity}  For $y_0$ from \eqref{eq:y_0} the identity holds:
	\begin{equation}\label{eq:identity}
		\frac{(y_0-1)(y_0-x)}{x(x-1)y_0^\prime - y_0(y_0-1)}=-2.
	\end{equation}
\end{lemma}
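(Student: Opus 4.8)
The plan is to reduce the identity to a single clean algebraic relation and then verify it by direct substitution. The starting point is the explicit data recorded in \eqref{eq:ty_0}, namely $y_0 = -(a-t-1)$ and $x = (a-t-1)^2$. The crucial observation is that these two formulas together say nothing more than $x = y_0^2$; this one relation captures everything we need and lets us eliminate the parameter $t$ entirely from the computation.

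First I would compute $y_0' = dy_0/dx$ directly from $x = y_0^2$. Differentiating implicitly gives $1 = 2 y_0\, y_0'$, hence $y_0' = 1/(2y_0)$. Equivalently, one differentiates $y_0 = -(a-t-1)$ and $x = (a-t-1)^2$ with respect to $t$ and takes the ratio, obtaining the same expression; this confirms that $y_0'$ is to be read as the derivative with respect to $x$, consistent with its role in the Okamoto transformation \eqref{Okamoto}.

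Next I would substitute $x = y_0^2$ and $y_0' = 1/(2y_0)$ into the two factors of \eqref{eq:identity}. The numerator factors as
$$
(y_0-1)(y_0-x) = (y_0-1)(y_0 - y_0^2) = -y_0 (y_0-1)^2,
$$
while the denominator becomes
$$
x(x-1)y_0' - y_0(y_0-1) = \frac{y_0(y_0^2-1)}{2} - y_0(y_0-1) = \frac{y_0(y_0-1)^2}{2}.
$$
Dividing the two expressions yields $-2$, which is exactly the claimed identity.

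I do not expect any genuine obstacle here: the statement is a routine verification once the relation $x = y_0^2$ is isolated. The only points requiring care are the correct interpretation of $y_0'$ as the derivative with respect to $x$ and the bookkeeping of signs; after factoring out the common factor $y_0(y_0-1)^2$ from numerator and denominator, the value $-2$ is immediate.
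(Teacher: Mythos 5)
Your proof is correct and follows essentially the same route as the paper: the paper's proof likewise observes $\dfrac{dy_0}{dx}=\dfrac{1}{2y_0}$ (its equation \eqref{eq:diffy_0}) and then declares the rest a straightforward verification, which is exactly the substitution you carry out. Your explicit isolation of the relation $x=y_0^2$ and the factorization of numerator and denominator as $-y_0(y_0-1)^2$ and $\tfrac12 y_0(y_0-1)^2$ simply fills in the details the paper leaves to the reader.
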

\begin{proof} Observe that
	\begin{equation}\label{eq:diffy_0}
		\frac{dy_0}{dx} = \frac{1}{2y_0}.
	\end{equation}
	The rest is a straightforward verification.
\end{proof}
\begin{theorem}\label{th:painleve}  Let
	\begin{equation}\label{eq:y}
		y(t):=-y_0(t).
	\end{equation}
	Then $y(t)=a-t-1$ is a solution to the Painlev\'e VI $(1/8,-1/8,1/8, 3/8)$ as a function of $x=(a-t-1)^2$.
\end{theorem}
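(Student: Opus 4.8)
The plan is to obtain $y(t)$ directly by applying the Okamoto transformation \eqref{Okamoto} to the Picard solution $y_0$ identified in the preceding Proposition, and then to invoke Theorem \ref{th:ds2019t1} to certify that the result solves a Painlev\'e VI equation with the constants \eqref{constants}. Concretely, the preceding Proposition gives $y_0(t)=-(a-t-1)$ as a Picard solution to Painlev\'e VI $(0,0,0,1/2)$ with independent variable $x=(a-t-1)^2$, and Theorem \ref{th:ds2019t1} asserts that the Okamoto transform of such a Picard solution satisfies Painlev\'e VI $(1/8,-1/8,1/8,3/8)$. Thus the only thing left to do is to evaluate the right-hand side of \eqref{Okamoto} and confirm that it equals $-y_0=a-t-1$, which is the definition \eqref{eq:y}.

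The key step is to rewrite the second summand of \eqref{Okamoto} by factoring $y_0$ out of its numerator:
$$
y=y_0+y_0\cdot\frac{(y_0-1)(y_0-x)}{x(x-1)y_0^\prime-y_0(y_0-1)}.
$$
The fraction appearing here is exactly the quantity evaluated in Lemma \ref{lemma:identity}, namely $-2$ by \eqref{eq:identity}. Hence $y=y_0-2y_0=-y_0$, and substituting $y_0=-(a-t-1)$ from \eqref{eq:ty_0} yields $y(t)=a-t-1$, in agreement with \eqref{eq:y}. This is the entire computational content of the theorem: the combination of the Okamoto formula with the identity \eqref{eq:identity} is precisely what collapses the transformation to the clean answer $y=-y_0$.

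I expect essentially no serious obstacle here, because the genuine work has already been carried out upstream — in establishing the Picard property of $y_0$ (the preceding Proposition) and in verifying the identity \eqref{eq:identity} (Lemma \ref{lemma:identity}) — so the present statement is their direct corollary. The one point that would need a word of care is checking that the Okamoto transformation is being applied in the regime guaranteed by Theorem \ref{th:ds2019t1}, that is, with $c_1,c_2$ held fixed while only $x$ varies. This is exactly the situation supplied by the Proposition, since the points $P_0(t)$ have fixed order four and therefore fixed $c_1,c_2$ independent of $t$ and hence of $x$; this is what legitimizes the parameter matching $(0,0,0,1/2)\mapsto(1/8,-1/8,1/8,3/8)$ and completes the argument.
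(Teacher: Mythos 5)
Your proposal is correct and follows the paper's own argument exactly: apply the Okamoto transformation \eqref{Okamoto}, use Lemma \ref{lemma:identity} to collapse the correction term to $-2y_0$, and conclude $y=-y_0=a-t-1$. Your extra remarks on invoking Theorem \ref{th:ds2019t1} with $c_1,c_2$ fixed merely make explicit what the paper leaves implicit in the preceding Proposition and the background section, so there is no substantive difference.
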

\begin{proof} By applying the Okamoto transformation \eqref{Okamoto} and using Lemma \ref{lemma:identity} we get:
	$
	y=y_0-2y_0=-y_0.
	$
\end{proof}

\begin{remark}
	Both solutions $y=y(x)$ to Painlev\'e VI $(1/8,-1/8,1/8, 3/8)$ and $y_0=y_0(x)$ to Painlev\'e VI $(0,0,0, 1/2)$ can be seen as instances of the solution parametrized as $y(s)=s$, $x(s)=s^2$ where $s=a-t-1$ in one case and $s=-(a-t-1)$ in the other case.
	So, in some sense, here the Okamoto transformation maps the ``same" solutions to each other, although the Okamoto transformation is not identity itself. It is an involution in this case. Let as also observe that the same solution was obtained by Hitchin in \cite{Hitchin} in his consideration of points of order four on elliptic curves. Hitchin also observed in the same paper that $y(s)=s$, $x(s)=s^2$ provides solutions to all Painlev\'e VI with
	$$
	\alpha + \beta=0,\quad \gamma +\delta=\frac{1}{2},
	$$
	in \eqref{Painleve}. However, Hitchin did not consider the Okamoto transformation nor iso-periodic confocal pencils.
\end{remark}
\begin{remark}
	We did not need to use \eqref{derivative} here since our iso-rotational confocal family provided an explicit form of the one-parameter family of cubics $\E_t=\E_t(\mu, \lambda)$ with the section of marked points of order four lying above $\lambda=0$. This section provides an explicit form of $y_0$ (see \eqref{eq:ty_0}), which can be differentiated directly, as in \eqref{eq:diffy_0}.
\end{remark}

	\subsubsection{Hexagons}
	
	Here $x_0=1/2$ and $y_0=0$ and $b=a-1/4$.
	Thus,
	$$
	\Gamma_0:\ \left(x-\frac{1}{2}\right)^2+y^2=1,
	$$
	and
	$$
	\mathcal{C}(t):\ \frac{x^2}{a-t}+\frac{y^2}{a-t-\frac{1}{4}}=1.
	$$
The pencil of cubics \eqref{eq:elliptic-curves} is then given by:
	$$
		\E_t: \mu^2=
		-\frac{1}{16} ((4s-1) \lambda +4)
		\left(4s \lambda^2 +(4  s+3) \lambda+4\right)\\
	$$
	where $s=a-t$.
	
	Each of the cubics $\E_t$ has three finite ramification points:
	$$
	\lambda_1(t)=-\frac{4}{4s-1},\quad \lambda_{2,3}(t)=-\frac{1}{2}-\frac{3}{8s}\pm\frac{\sqrt{16s^2-40s+9}}{8s}.
	$$
	Let us consider M\"obius transformations on the basis
	$$
	\phi_t(\lambda)=\frac{\lambda-\lambda_1}{\lambda_2-\lambda_1},
	$$
	and denote
	$$
	x(s):=\phi_t(\lambda_3)=\frac{\lambda_3-\lambda_1}{\lambda_2-\lambda_1},\quad y_0(s):=\phi_t(0)= \frac{\lambda_1}{\lambda_1-\lambda_2}.
	$$
	We calculate
	\begin{align*}
		&x(s)= \frac{3+24s-16 s^2-(4s-1) \sqrt{16 s^2-40 s+9}}
		{3+24s-16 s^2+(4s-1) \sqrt{16 s^2-40 s+9}},\\
		&y_0(s)= \frac{32s}{3+24s-16 s^2+(4s-1) \sqrt{16 s^2-40 s+9}},
	\end{align*}
	and
	$$
	\frac{dy_0}{dx}=\frac{dy_0}{ds}:\frac{dx}{ds}
	=
	-\frac{64 s^3-80 s^2-20 s+9-(3+16 s^2) \sqrt{16 s^2-40 s+9}}{24(4s-1)}
	$$
	Here $y_0$ is a Picard solution to the Painlev\'e VI $(0,0,0, 1/2)$ as a function of $x$.
	We apply the Okamoto transformation \eqref{Okamoto} and calculate:
	\begin{equation}\label{eq:Hitchin6a}
		\begin{aligned}
			y(x) &= y_0 + \frac{y_0(y_0-1)(y_0-x)}{x(x-1)\frac{dy_0}{dx}- y_0(y_0-1)}\\
			&=
			\frac{12-16 s}
			{
				3 + 24 s - 16 s^2
				+
				(4s-1) \sqrt{16 s^2-40 s+9}
			},\\
			x(s)&= \frac{3+24s-16 s^2-(4s-1) \sqrt{16 s^2-40 s+9}}
			{3+24s-16 s^2+(4s-1) \sqrt{16 s^2-40 s+9}}.
		\end{aligned}
	\end{equation}
	\begin{theorem}\label{th:painlevek6}
		The function $y(s)$, as a function of $x(s)$, given by the last two formulas \eqref{eq:Hitchin6a}, is a solution to Painlev\'e VI $(1/8,-1/8,1/8, 3/8)$ equation, which corresponds to a $6$-Poncelet inscribed family.
	\end{theorem}
	\begin{remark} Hitchin in \cite{Hitchin} obtained the following parameterization of a solution to Painlev\'e VI $(1/8,-1/8,1/8, 3/8)$ equation which corresponds to $6$-Poncelet polygons:
		\begin{equation}\label{eq:Hit6}
			\begin{aligned}
				x(S)&= \frac{S^3(S+2)}
				{2S+1},\\
				y(S)&= \frac{S(1+S+S^2)}{2S+1}.
			\end{aligned}
		\end{equation}
		{
			\begin{lemma}\label{lem:rep6}
				The solutions \eqref{eq:Hit6} are obtained from the solutions \eqref{eq:Hitchin6a} by the reparametrization
				$$s=\dfrac{2 S^2+5 S+2}{4 S}.$$
			\end{lemma}
		}
		
	\end{remark}

\

\

\subsection*{Acknowledgment}
We are grateful to Corinna Ulcigrai and the referees for valuable remarks and comments, which helped us improve the presentation.
The research  was supported
by the Australian Research Council, Discovery Project 190101838 \emph{Billiards within quadrics and beyond}, {  the Science Fund of Serbia grant \emph{Integrability and Extremal Problems in Mechanics, Geometry and
Combinatorics}, MEGIC, Grant No. 7744592  and the Simons Foundation grant no. 854861.}

\

\

{\bf Data availability.} Data sharing not applicable to this article as no datasets were generated
or analysed during the current study.

\

\begin{bibdiv}
\begin{biblist}

\bib{BergerGeometryII}{book}{
	author={Berger, Marcel},
	title={Geometry. II},
	series={Universitext},
	publisher={Springer-Verlag},
	place={Berlin},
	date={1987}
}

\bib{Cayley1853}{article}{
	author={Cayley, Arthur},
	title={Note on the porism of the in-and-circumscribed polygon},
	journal={Philosophical magazine},
	volume={6},
	date={1853},
	pages={99--102}
}

\bib{Cayley1861}{article}{
	author={Cayley, Arthur},
	title={On the porism of the in-and-circumscribed polygon},
	journal={Philosophical Transactions of the Royal Society of London},
	volume={151},
	date={1861},
	pages={225--239}
}

\bib{DarbouxSUR}{book}{
	author={Darboux, Gaston},
	title={
		Le\c{c}ons sur la th\'eorie
		g\'en\'erale des surfaces et les
		applications g\'eo\-m\'etri\-ques du
		calcul infinitesimal
	},
	publisher={Gauthier-Villars},
	address={Paris},
	date={1914},
	volume={2 and 3}
}

\bib{DGSV}{book}{
	author={Daepp, Ulrich},
	author={Gorkin, Pamela},
	author={Shaffer, Andrew},
	author={Voss, Karl},
	title={Finding ellipses},
	series={Carus Mathematical Monographs},
	volume={34},
	note={What Blaschke products, Poncelet's theorem, and the numerical range
		know about each other},
	publisher={MAA Press, Providence, RI},
	date={2018},
	pages={xi+268},
}

\bib{DR2011knjiga}{book}{
	author={Dragovi\'{c}, Vladimir},
	author={Radnovi\'{c}, Milena},
	title={Poncelet porisms and beyond},
	series={Frontiers in Mathematics},
	note={Integrable billiards, hyperelliptic Jacobians and pencils of
		quadrics},
	publisher={Birkh\"{a}user/Springer Basel AG, Basel},
	date={2011},
	pages={viii+293},
}

\bib{DragRadn2014bul}{article}{
   author={Dragovi\'c, Vladimir},
	author={Radnovi\'c, Milena},
     TITLE = {Bicentennial of the great {P}oncelet theorem (1813--2013):
              current advances},
   JOURNAL = {Bull. Amer. Math. Soc. (N.S.)},
    VOLUME = {51},
      YEAR = {2014},
    NUMBER = {3},
     PAGES = {373--445},
      ISSN = {0273-0979},
       DOI = {10.1090/S0273-0979-2014-01437-5},
       URL = {https://doi.org/10.1090/S0273-0979-2014-01437-5},
}

\bib{DragRadn2014jmd}{article}{
	author={Dragovi\'c, Vladimir},
	author={Radnovi\'c, Milena},
	title={Pseudo-integrable billiards and arithmetic dynamics},
	date={2014},
	journal={Journal of Modern Dynamics},
	volume={8},
	number={1},
	pages={109--132}
}

\bib{DR2019cmp}{article}{
	author={Dragovi\'{c}, Vladimir},
	author={Radnovi\'{c}, Milena},
	title={Periodic ellipsoidal billiard trajectories and extremal
		polynomials},
	journal={Comm. Math. Phys.},
	volume={372},
	date={2019},
	number={1},
	pages={183--211},
}

\bib{DR2019rcd}{article}{
	author={Dragovi\'{c}, Vladimir},
	author={Radnovi\'{c}, Milena},
	title={Caustics of Poncelet polygons and classical extremal polynomials},
	journal={Regul. Chaotic Dyn.},
	volume={24},
	date={2019},
	number={1},
	pages={1--35},
}
{
\bib{DR2024rcd}{article}{
	author={Dragovi\'{c}, Vladimir},
	author={Radnovi\'{c}, Milena},
	title={Poncelet polygons, singular cubics, and classical Chebyshev polynomials},
	volume={},
	pages={},
}
}
\bib{DS2019}{article}{
	author={Dragovi\'{c}, Vladimir},
	author={Shramchenko, Vasilisa},
	title={Algebro-geometric approach to an Okamoto transformation, the
		Painlev\'{e} VI and Schlesinger equations},
	journal={Ann. Henri Poincar\'{e}},
	volume={20},
	date={2019},
	number={4},
	pages={1121--1148},
}

\bib{Duis}{book} {
	author={Duistermaat, Johannes J.},
	title={Discrete integrable systems},
	series={Springer Monographs in Mathematics},
	note={QRT maps and elliptic surfaces},
	publisher={Springer, New York},
	date={2010},
	pages={xxii+627},
}

\bib{FlattoBOOK}{book}{
   author={Flatto, Leoplod},
   title={Poncelet's Theorem},
   publisher={AMS},
   date={2009},
 pages={240},
   isbn={ISBN: 978-0-8218-4375-8}
}

\bib{GrifHar1978}{article}{
    author={Griffiths, Philip},
    author={Harris, Joe},
    title={On Cayley's explicit solution to Poncelet's porism},
    journal={EnsFeign. Math.},
    volume={24},
    date={1978},
    number={1-2},
    pages={31--40}
}
\bib{Hitchin}{article}{
	author={Hitchin, N. J.},
	title={Poncelet polygons and the Painlev\'{e} equations},
	conference={
		title={Geometry and analysis},
		address={Bombay},
		date={1992},
	},
	book={
		publisher={Tata Inst. Fund. Res., Bombay},
	},
	date={1995},
	pages={151--185},
}
\bib{IKSYGaussPainleve}{book}{
	author={Iwasaki, Katsunori},
	author={Kimura, Hironobu},
	author={Shimomura, Shun},
	author={Yoshida, Masaaki},
	title={From Gauss to Painlev\'{e}},
	series={Aspects of Mathematics, E16},
	note={A modern theory of special functions},
	publisher={Friedr. Vieweg \& Sohn, Braunschweig},
	date={1991},
}

\bib{King}{article}{
	author={King, Jonathan L.},
	title={Three problems in search of a measure},
	journal={Amer. Math. Monthly},
	volume={101},
	date={1994},
	number={7},
	pages={609--628},
}

\bib{KozTrBIL}{book}{
	author={Kozlov, Valery},
	author={Treshch\"ev, Dmitry},
	title={Billiards},
	publisher={Amer. Math. Soc.},
	address={Providence RI},
	date={1991}
}

\bib{LebCONIQUES}{book}{
    author={Lebesgue, Henri},
    title={Les coniques},
    publisher={Gauthier-Villars},
    address={Paris},
    date={1942}
}

\bib{MFSS2019}{article}{
	author={Mart\'{\i}nez-Finkelshtein, Andrei},
	author={Simanek, Brian},
	author={Simon, Barry},
	title={Poncelet's theorem, paraorthogonal polynomials and the numerical
		range of compressed multiplication operators},
	journal={Adv. Math.},
	volume={349},
	date={2019},
	pages={992--1035},
}

\bib{Okamoto1987}{article}{
	author={Okamoto, Kazuo},
	title={Studies on the Painlev\'{e} equations. I. Sixth Painlev\'{e} equation
		$P_{{\rm VI}}$},
	journal={Ann. Mat. Pura Appl. (4)},
	volume={146},
	date={1987},
	pages={337--381},
}

\bib{Picard}{article}{
	author={Picard, E.},
	title= {M\'emoire sur la th\'eorie des
fonctions alg\'ebriques de deux variables},
	journal={Journal de Liouville},
	volume={5},
	date={1889},
	pages={135--319}
}

\end{biblist}
\end{bibdiv}
\end{document}